\theoremstyle{plain}
\newtheorem{theorem}{Theorem}[section]
\newtheorem{proposition}{Proposition}[section]
\newtheorem{lemma}{Lemma}[section]
\newtheorem{remark}{\sc Remark}[section]
\newtheorem{example}{\sc Example}[section]
\renewcommand{\baselinestretch}{1.2}
\newenvironment{reference}[1]{%

\begin{flushleft}\normalsize{\textbf{References}}\end{flushleft}%
\begin{enumerate}\setlength{\itemsep}{-5pt}\small
}{\end{enumerate}}
\renewcommand{\section}[1]{%
\addtocounter{section}{1}\setcounter{subsection}{0}%
\begin{flushleft}\vspace*{10pt}
\normalsize{\textbf{\arabic{section}. #1}}
\end{flushleft}}
\renewcommand{\subsection}[1]{\addtocounter{subsection}{1}%
\noindent\textbf{\arabic{section}.\arabic{subsection}. #1 }}
\title{\vspace*{-21mm}
\large{\textbf{
\ \ On the $\mathbb Z_p$-ranks of tamely ramified Iwasawa modules
}}
\footnotetext{2010 Mathematics Subject Classification: Primary 11R23, Secondary 11R18.}
\footnotetext{Key words: $\mathbb Z_p$-extension, Iwasawa module, tame ramification.}
\footnotetext{
Postprint of an article published in {\em  Int.\ J.\ Number Theory}, DOI: 10.1142/S1793042113500395 \copyright World Scientific Publishing Company \ \texttt{http://www.worldscientific.com/worldscinet/ijnt}
}
}
\author{ 
\textsc{\normalsize Tsuyoshi Itoh}
\and 
\textsc{\normalsize Yasushi Mizusawa}
\and 
\textsc{\normalsize Manabu Ozaki}
}
\date{}
\begin{document}
{\renewcommand{\baselinestretch}{1.05} \maketitle}

\vspace*{-13mm}
\renewcommand{\abstractname}{}
{\renewcommand{\baselinestretch}{1.05}
\begin{abstract}{\footnotesize 
\noindent\textbf{Abstract.} 
For a finite set $S$ of prime numbers, we consider the $S$-ramified Iwasawa module which is the Galois group of the maximal abelian pro-$p$-extension unramified outside $S$ over the cyclotomic $\mathbb Z_p$-extension of a number field $k$. In the case where $S$ does not contain $p$ and $k$ is the rational number field or an imaginary quadratic field, we give the explicit formulae of the $\mathbb Z_p$-ranks of the $S$-ramified Iwasawa modules by using Brumer's $p$-adic version of Baker's theorem on the linear independence of logarithms of algebraic numbers. 
}\end{abstract}}

\thispagestyle{empty}

\section{Introduction}

Let $p$ be a prime number, and $\mathbb Q_{\infty}$ the $\mathbb Z_p$-extension of the rational number field $\mathbb Q$. For a finite extension $k/\mathbb Q$, the composition $k_{\infty}=k \mathbb Q_{\infty}$ is the cyclotomic $\mathbb Z_p$-extension of the number field $k$. The Galois group $\varGamma=\mathrm{Gal}(k_{\infty}/k)$ is isomorphic to the additive group of the ring $\mathbb Z_p$ of $p$-adic integers. For a finite set $S$ of prime numbers, the $S$-ramified Iwasawa module $X_S(k_{\infty})=\mathrm{Gal}(L_S(k_{\infty})/k_{\infty})$ is defined as the Galois group of the maximal abelian pro-$p$-extension $L_S(k_{\infty})/k_{\infty}$ unramified outside $S$ (and unramified at infinite places), on which $\varGamma$ acts via the inner automorphisms of $\mathrm{Gal}(L_S(k_{\infty})/k)$. Then the complete group ring $\mathbb Z_p[[\varGamma]]$ acts on $X_S(k_{\infty})$ continuously, and $X_S(k_{\infty})$ is finitely generated as a $\mathbb Z_p[[\varGamma]]$-module. A description of the explicit structure of $X_S(k_{\infty})$ is one of the main themes in Iwasawa theory. 

In the case that $p \in S$, the structure of $X_S(k_{\infty})$ is relatively well known. For instance, $X_S(k_{\infty})$ is a finitely generated free $\mathbb Z_p$-module if $k=\mathbb Q$ and $p \in S$ (cf.\ \cite{NSW2} Theorem 10.5.6, \cite{Iwa81} Theorem 3 etc.). On the other hand, particularly when $S=\emptyset$, the structural invariants of the unramified Iwasawa module $X_{\emptyset}(k_{\infty})$ appear in Iwasawa's class number formula \cite{Iwa59} (cf.\ \cite{Was} etc.). Iwasawa \cite{Iwa73} conjectured that $X_{\emptyset}(k_{\infty})$ is a finitely generated $\mathbb Z_p$-module, and Greenberg \cite{Gre76} conjectured that $X_{\emptyset}(k_{\infty})$ is finite if $k$ is a totally real number field. These conjectures are unsolved in general at present, while many affirmative results are known. In the case that $p \not\in S$, the tamely ramified Iwasawa module $X_S(k_{\infty})$ is still an uncertain object, even if the unramified quotient $X_{\emptyset}(k_{\infty})$ is well understood. 

In this paper, we focus on the $\mathbb Z_p$-rank 
\[
\lambda_S(k_{\infty}) = \mathrm{rank}_{\mathbb Z_p} X_S(k_{\infty}) = \dim_{\mathbb Q_p} (X_S(k_{\infty}) \otimes_{\mathbb Z_p} \mathbb Q_p) 
\]
of the tamely ramified Iwasawa module $X_S(k_{\infty})$, where $\mathbb Q_p$ denotes the field of $p$-adic numbers. The relative formulae of $\lambda_{\emptyset}(k_{\infty})$ for the $p$-extensions of $J$-fields are well known as the formulae of Riemann-Hurwitz type by Iwasawa \cite{Iwa81} and Kida \cite{Kida80} \cite{Kida82}. In the case where $p=2$ and $k$ is an imaginary quadratic field, Ferrero \cite{Fer80} and Kida \cite{Kida79} gave the explicit formulae of $\lambda_{\emptyset}(k_{\infty})$, and Salle \cite{Salle} gave a relative formula between $\lambda_S(k_{\infty})$ and $\lambda_S(\mathbb Q_{\infty})$ with its applications in some special cases where $\lambda_S(\mathbb Q_{\infty})=0$. As a generalization of these formulae, we give the formulae of $\lambda_S(k_{\infty})$ in the most basic case where $k=\mathbb Q$ and also for imaginary quadratic fields $k$. 

For a prime number $\ell \neq p$, we denote by $P_{\ell}=p^{N_{\ell}}$ the number of places of $\mathbb Q_{\infty}$ lying over $\ell$. If $2 \neq \ell \equiv \pm 1 \pmod{p}$, $N_{\ell}$ is the largest integer such that $\ell^2 \equiv 1 \pmod{4p^{N_{\ell}+1}}$. If $\ell=2$ and $p=3$, then $P_{\ell}=1$. We regard ``$\max \emptyset =0$". The main results are the following theorems. 

\begin{theorem}\label{mainthm}
Let $p$ be a prime number, and $S$ a finite set of prime numbers such that $p \not\in S$. 

{\em (i)} If $p \neq 2$, put $S^{\prime}=\{\,\ell \in S\ |\ \ell \equiv 1\pmod{p}\,\}$ and $P_{\max}^{\prime}=\max\{\, P_{\ell} \ |\ \ell \in S^{\prime} \,\}$. Then 
\[
\lambda_S(\mathbb Q_{\infty}) = \sum_{\ell \in S^{\prime}} P_{\ell} - P_{\max}^{\prime} . 
\]

{\em (ii)} If $p=2$, put $S^{\pm}=\{\, \ell \in S \ |\ \ell \equiv \pm 1 \pmod{4}\,\}$, $P_{\max}^+ = \max\{\, P_{\ell} \ |\ \ell \in S^+\,\}$ and $\mathbb P=\{\, P_{\ell} \ |\ \ell \in S^-,\ P_{\ell} \ge P_{\max}^+\,\}$. Then 
\[
\lambda_S(\mathbb Q_{\infty}) = \sum_{\ell \in S} P_{\ell} - P_{\max}^+ - \sum_{P \in \mathbb P} P. 
\]
\end{theorem}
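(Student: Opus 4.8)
The plan is to reduce $\lambda_S(\mathbb Q_\infty)$ to the $\mathbb Z_p$-rank of the image of the global units inside a semilocal inertia module, and then to read off that rank from Brumer's theorem. Write $\mathbb Q_n$ for the $n$-th layer and $L_\emptyset(\mathbb Q_\infty)\subseteq L_S(\mathbb Q_\infty)$ for the maximal unramified subextension, so that $\mathrm{Gal}(L_S/L_\emptyset)$ is topologically generated by the inertia groups of the primes over $S$. Class field theory at each finite layer, passed to the projective limit along the tower, yields an exact sequence of $\Lambda=\mathbb Z_p[[\varGamma]]$-modules
\[
\overline{\mathcal E}_\infty\longrightarrow \bigoplus_{\ell\in S}\,\bigoplus_{\mathfrak l\mid\ell}I_{\mathfrak l}\longrightarrow X_S(\mathbb Q_\infty)\longrightarrow X_\emptyset(\mathbb Q_\infty)\longrightarrow 0,
\]
where $I_{\mathfrak l}$ is the inertia subgroup of a prime $\mathfrak l$ over $\ell$ and $\overline{\mathcal E}_\infty$ is the inverse limit of the closures of the global units in the semilocal units at $S$. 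Since the unramified Iwasawa module $X_\emptyset(\mathbb Q_\infty)$ is known to be finite, i.e.\ $\lambda_\emptyset(\mathbb Q_\infty)=0$, tensoring with $\mathbb Q_p$ gives
\[
\lambda_S(\mathbb Q_\infty)=\mathrm{rank}_{\mathbb Z_p}\Big(\bigoplus_{\mathfrak l}I_{\mathfrak l}\Big)-\mathrm{rank}_{\mathbb Z_p}\big(\text{image of }\overline{\mathcal E}_\infty\big).
\]

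I would then compute the two ranks. The decomposition group of $\ell$ in $\varGamma\cong\mathbb Z_p$ has index $P_\ell=p^{N_\ell}$, so there are exactly $P_\ell$ primes $\mathfrak l$ over $\ell$, and a local analysis of the tame quotient shows $I_{\mathfrak l}\cong\mathbb Z_p$ precisely when the residue fields of the tower acquire unbounded $p$-power roots of unity — that is, for $\ell\equiv1\pmod p$ when $p$ is odd, and for every (odd) $\ell\in S$ when $p=2$ — while $I_{\mathfrak l}=0$ otherwise. Since $\varGamma$ acts on each $I_{\mathfrak l}$ through the cyclotomic character $\chi$, the module $\bigoplus_{\mathfrak l\mid\ell}I_{\mathfrak l}$ is the induced module $\mathrm{Ind}_{D_\ell}^{\varGamma}\mathbb Z_p(1)\cong\Lambda/(g_\ell)$, with $D_\ell=\varGamma^{p^{N_\ell}}$ and $g_\ell$ a distinguished generator of degree $P_\ell$; thus the first rank is $\sum_{\ell\in S^{\prime}}P_\ell$ for $p$ odd and $\sum_{\ell\in S}P_\ell$ for $p=2$. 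The decisive structural point is that for $p$ odd the $g_\ell$ are the $\chi$-twists of $\omega_{N_\ell}=(1+T)^{p^{N_\ell}}-1$ (with $T=\gamma-1$), so that $\omega_{N_\ell}\mid\omega_{N_{\ell_0}}$ for $N_{\ell_0}=\max_{\ell\in S^{\prime}}N_\ell$ forces the corresponding nesting $g_\ell\mid g_{\ell_0}$.

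The heart of the argument is the second rank, where Brumer's $p$-adic analogue of Baker's theorem enters. Up to finite index the norm-coherent global units are generated over $\Lambda$ by the cyclotomic units, so the image of $\overline{\mathcal E}_\infty$ is a cyclic module $\Lambda\bar v$. The nesting from the previous step shows $g_{\ell_0}$ annihilates $\bar v$, whence $\mathrm{rank}_{\mathbb Z_p}(\Lambda\bar v)\le\deg g_{\ell_0}=P_{\max}^{\prime}$ and therefore $\lambda_S(\mathbb Q_\infty)\ge\sum_{\ell\in S^{\prime}}P_\ell-P_{\max}^{\prime}$ comes for free. The reverse inequality requires showing that the component of $\bar v$ in the dominant factor $\Lambda/(g_{\ell_0})$ is a generator, i.e.\ that the cyclotomic units do not degenerate there; this is equivalent to the nonvanishing of a regulator-type determinant of $p$-adic logarithms of cyclotomic units, and that nonvanishing is exactly what Brumer's theorem on the $\overline{\mathbb Q}$-linear independence of $p$-adic logarithms supplies. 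It forces $\mathrm{rank}_{\mathbb Z_p}(\Lambda\bar v)=P_{\max}^{\prime}$ and hence part (i). This reduction to, and application of, Brumer's theorem is the step I expect to be the main obstacle.

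For part (ii) the same strategy applies, but the book-keeping is markedly more delicate: for $\ell\in S^-$ the generator $g_\ell$ acquires a sign (coming from $\langle\ell\rangle=-\ell$), which breaks the uniform nesting of the $g_\ell$, and the global units now include $-1$ alongside the cyclotomic units, whose $2$-adic images depend on whether $\ell\equiv1$ or $-1\pmod 4$. Splitting the semilocal module into its $S^+$- and $S^-$-parts, the untwisted $S^+$-part contributes a single relation of rank $P_{\max}^+$ as in part (i), while on the $S^-$-part the images produce, for each distinct size $P_\ell$ of a prime of $S^-$ that is not dominated by $P_{\max}^+$, one further independent relation of that rank; this accounts for the set $\mathbb P$ and the term $\sum_{P\in\mathbb P}P$, the passage to a set encoding that equal-sized $S^-$-primes share a single relation. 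Brumer's theorem (in its $2$-adic form) is again what guarantees these relations have maximal rank and are mutually independent. The principal difficulties here are the exact determination of the twisted module structure at the primes of $S^-$ and the verification that no further relations arise.
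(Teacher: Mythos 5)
Your reduction is sound and is essentially the paper's Lemma~\ref{reduce}: passing to the projective limit of the ray class field sequence gives an exact sequence from the norm-coherent units into $\bigoplus_{\ell\in S}\varLambda/(f_\ell)$ with quotient $X_S(\mathbb Q_\infty)$ (and $X_\emptyset(\mathbb Q_\infty)=0$); your $g_\ell$ are the paper's $f_\ell$, including the sign $(1+T)^{P_\ell}+5^{P_\ell}$ for $p=2$, $\ell\equiv-1\pmod 4$; the units form a cyclic $\varLambda$-module by Sinnott's index theorem; and your lcm bookkeeping (nesting $f_\ell\mid f_{\ell_0}$ for odd $p$, the set $\mathbb P$ for $p=2$) reproduces both formulas. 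Everything therefore hinges on your final claim: that the full rank of the image of the cyclotomic units in the dominant factor ``is equivalent to the nonvanishing of a regulator-type determinant of $p$-adic logarithms of cyclotomic units,'' which Brumer supplies. That identification is wrong, and it is where the proposal breaks.

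The target module $\varprojlim(O_{\mathbb B_n}/\ell)^\times\otimes_{\mathbb Z}\mathbb Z_p$ is built from residue fields of characteristic $\ell\neq p$; the map from the units is reduction modulo the primes above $\ell$, so its nondegeneracy is a statement about discrete logarithms in $\mathbb F_{\ell^m}^\times$, about which the $p$-adic logarithm (a function at the place $p$) says nothing. What Brumer does give for cyclotomic units is the nonvanishing of their $p$-adic regulator, i.e.\ Leopoldt's conjecture for the fields $\mathbb B_n$; that controls the map into the semilocal units \emph{at $p$}, which is exactly what the paper uses in Lemma~\ref{Xpell} to get $X_{\{p,\ell\}}(\mathbb Q_\infty)\simeq{\mathbb Z_p}^{\!\oplus P_\ell}$ --- it does not touch the mod-$\ell$ images. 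The paper's actual route (Theorem~\ref{keythm}) transfers the problem from $\ell$ to $p$: it takes a generator $\alpha$ of $\mathfrak L^{(p-1)h}$ for a prime $\mathfrak L$ over $\ell$ in the decomposition field $F$, forms the Kummer extension $K=\mathbb Q(\mu_{p^\infty})(\sqrt[p^n]{\alpha^{e_n\sigma}}\mid n\ge 1,\ \sigma\in G)$, proves $K/\mathbb Q_\infty$ is abelian (Lemma~\ref{K}), and applies Brumer to the numbers $\log_p\alpha^\sigma$ --- $p$-adic logarithms of $\ell$-units, not of cyclotomic units --- to show via the Frobenius determinant that $\mathrm{Gal}(K_p/\mathbb Q_p(\mu_{p^\infty}))\simeq{\mathbb Z_p}^{\!\oplus P_\ell}$ (Lemma~\ref{K_p}). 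Comparing ranks shows the decomposition group $D_p$ at $p$ has finite index in $X_{\{p,\ell\}}(\mathbb Q_\infty)$, and since $D_p/I_p$ is finite (killed by $T$ and by $f_\ell(0)$), $X_{\{\ell\}}(\mathbb Q_\infty)\simeq X_{\{p,\ell\}}(\mathbb Q_\infty)/I_p$ is finite --- which, by your own rank count, is precisely the full-rank statement you need. Without this transfer machinery (or a genuinely new nonvanishing theorem for discrete logarithms mod $\ell$), the equality in (i) and the independence of the relations in (ii) remain unproved; only the inequality $\lambda_S(\mathbb Q_\infty)\ge\sum_{\ell\in S'}P_\ell-P_{\max}'$ comes for free, as you say. (A minor additional point: for $p=2$ the paper must also handle ramification at the infinite places, via the kernel of $X_{\{2,\ell,\infty\}}(\mathbb Q_\infty)\rightarrow X_{\{2,\ell\}}(\mathbb Q_\infty)$ being killed by $2$, which your sketch omits.)
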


\begin{remark}{\em 
For an arbitrary integer $\lambda \ge 0$, one can find $S$ such that $\lambda_S(\mathbb Q_{\infty})=\lambda$. Refer to \cite{FOO06} for number fields $k$ with prescribed $\lambda_{\emptyset}(k_{\infty})$. 
}\end{remark}

\begin{example}{\em 
If $p=2$ and $S=\{3,5,7,31,79\}$, then $\lambda_S(\mathbb Q_{\infty})=0$. On the other hand, the maximal pro-$2$-extension over $\mathbb Q$ unramified outside $S$ is an infinite extension (cf.\ \cite{NSW2} Corollary 10.10.2). This example is analogous to the situation of \cite{Miz00} and \cite{Oza09} Corollary 1. 
}\end{example}

\begin{theorem}\label{imagquad}
Let $p$ be a prime number, and $S$ a finite set of prime numbers such that $p \not\in S$. Let $k$ be an imaginary quadratic field. 

{\em (i)} If $p \neq 2$, put $S^{\prime}$ and $P_{\max}^{\prime}$ as in Theorem \ref{mainthm}, and put $S^{\prime\prime}=\{\,\ell \in S\ |\ \ell \equiv -1\pmod{p},\ \hbox{\rm $\ell$ is inert in $k$}\,\}$. Assume that $S^{\prime} \cup S^{\prime\prime} \neq \emptyset$. Then 
\[
\lambda_S(k_{\infty}) 
= \lambda_{\emptyset}(k_{\infty}) 
+ \sum_{\ell \in S^{\prime} \cup S^{\prime\prime}} P_{\ell} 
+ \sum_{\substack{\ell \in S^{\prime} \\ {\rm split\,in\,} k}} P_{\ell} 
- P_{\max}^{\prime} 
- \delta 
\]
where $\delta=1$ if $p=3$ and $k=\mathbb Q(\sqrt{-3})$, and $\delta=0$ otherwise. 

{\em (ii)} If $p=2$, put $P_{\max}^+$ and $\mathbb P$ as in Theorem \ref{mainthm}, and let $S_k$ be the set of odd prime numbers ramifying in $k$. Assume that $S \cup S_k \neq \emptyset$. Then 
\[
\lambda_S(k_{\infty}) 
= 2\sum_{\ell \in S} P_{\ell} 
+ \sum_{\ell \in S_k \setminus S} P_{\ell}
- 1
- P_{\max}^+ 
- \sum_{P \in \mathbb P} P .
\]
\end{theorem}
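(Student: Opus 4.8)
The plan is to realize $X_S(k_\infty)$ as an extension of the unramified module $X_\emptyset(k_\infty)$ by the inertia contributions of the primes above $S$, and to compute each piece's $\mathbb Z_p$-rank separately. By class field theory the inertia subgroups $I_v$ of the primes $v$ of $k_\infty$ dividing $S$ topologically generate $\mathrm{Gal}(L_S(k_\infty)/L_\emptyset(k_\infty))$, so there is an exact sequence of $\mathbb Z_p[[\varGamma]]$-modules
\[
\bigoplus_{v \mid S} I_v \xrightarrow{\ \iota\ } X_S(k_\infty) \longrightarrow X_\emptyset(k_\infty) \longrightarrow 0,
\]
whose kernel $\ker\iota$ is, again by global reciprocity, the image of the (inverse limit of the) global unit groups $E_{k_n}$ of the layers under the collection of local reciprocity maps at the primes above $S$. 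Taking $\mathbb Z_p$-ranks gives
\[
\lambda_S(k_\infty) = \lambda_\emptyset(k_\infty) + \sum_{v \mid S} \mathrm{rank}_{\mathbb Z_p} I_v - \mathrm{rank}_{\mathbb Z_p}(\ker\iota),
\]
so the theorem reduces to two computations: the local ranks $\mathrm{rank}_{\mathbb Z_p} I_v$ and the global relation rank $\mathrm{rank}_{\mathbb Z_p}(\ker\iota)$.

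For the local ranks, note that for a prime $v$ of $k_\infty$ above $\ell\neq p$ the completion $(k_\infty)_v$ is the unramified $\mathbb Z_p$-extension of the completion of $k$, and the pro-$p$ tame inertia $I_v$ is free of rank $1$ precisely when $\mu_p\subset (k_\infty)_v$, i.e.\ when $p\mid \ell^{f_v}-1$ for the residue degree $f_v$, and is trivial otherwise. I would then count, for each $\ell\in S$, the primes $v\mid\ell$ of $k_\infty$ together with their residue degrees, distinguishing whether $\ell$ splits, is inert, or ramifies in $k$: when $\ell\equiv 1\pmod p$ every $v\mid\ell$ satisfies the condition (giving $2P_\ell$ primes if $\ell$ splits in $k$ and $P_\ell$ if it is inert), whereas when $\ell\equiv -1\pmod p$ the condition holds only for the inert primes, where the residue degree is doubled so that $\ell^{f_v}\equiv\ell^2\equiv 1\pmod p$. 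Summing yields $\sum_{v\mid S}\mathrm{rank}_{\mathbb Z_p}I_v = \sum_{\ell\in S'\cup S''}P_\ell+\sum_{\ell\in S',\ \mathrm{split}}P_\ell$, accounting for the positive terms of part (i); the $p=2$ count, where $\mu_2$ is automatic and the relevant condition involves $\ell^{f_v}\equiv 1\pmod 4$ together with the ramified primes $S_k$, produces the $2\sum_{\ell\in S}P_\ell+\sum_{\ell\in S_k\setminus S}P_\ell$ of part (ii).

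The crux is the global relation rank. The kernel $\ker\iota$ is the image of $\varprojlim_n(E_{k_n}\otimes\mathbb Z_p)$ under the product of local reciprocity maps, and controlling its rank amounts to deciding how many $\mathbb Z_p$-linear relations the $p$-adic logarithms of these (cyclotomic, or universal-norm) units satisfy at the primes above $S$. A priori there could be spurious $p$-adic dependences among these logarithms; ruling them out is exactly what Brumer's $p$-adic analogue of Baker's theorem provides, since it guarantees that logarithms of multiplicatively independent algebraic numbers are linearly independent over $\overline{\mathbb Q}\cap\mathbb Q_p$. Using this I expect to show that the image has the maximal possible rank, so that the relation rank is forced down to the minimum dictated by the obvious relations---the reciprocity/product-formula relation concentrated at the prime with the largest decomposition group---yielding $\mathrm{rank}_{\mathbb Z_p}(\ker\iota)=P'_{\max}$ for $p$ odd and $1+P^+_{\max}+\sum_{P\in\mathbb P}P$ for $p=2$. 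This is the step I expect to be the main obstacle: everything else is essentially local bookkeeping, whereas the exact determination of the relation rank is where the genuine arithmetic input is needed, and where the congruence regimes $\ell\equiv\pm 1$ and the quadratic splitting type interact.

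Combining the three displays gives the stated formulae, and it remains to treat the degenerate cases. When $p=3$ and $k=\mathbb Q(\sqrt{-3})=\mathbb Q(\mu_3)$ the extra $p$-power roots of unity in $k_\infty$ create one additional relation, which I would isolate by a direct analysis of the $\varGamma$-action on $\mu_{p^\infty}$ and which accounts for the correction $\delta=1$. For $p=2$ I would substitute the known explicit value of $\lambda_\emptyset(k_\infty)$ due to Ferrero and Kida (so that $\lambda_\emptyset$ no longer appears explicitly in part (ii)) and track carefully the contribution of the ramified odd primes $S_k$ together with the sign subtleties of $\mu_{2^\infty}$, which are responsible for the terms $-1$ and $-\sum_{P\in\mathbb P}P$. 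Throughout, the hypotheses $S'\cup S''\neq\emptyset$ and $S\cup S_k\neq\emptyset$ guarantee $L_S(k_\infty)\neq L_\emptyset(k_\infty)$, so that the inertia contribution is genuinely present and the maxima $P'_{\max}$ and $P^+_{\max}$ are taken over nonempty sets where required.
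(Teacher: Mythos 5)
Your skeleton is the same as the paper's: your exact sequence $\bigoplus_{v\mid S} I_v \to X_S(k_\infty) \to X_\emptyset(k_\infty) \to 0$ with kernel the image of the universal-norm units is exactly the ray-class sequence (2) of the paper, and your local rank bookkeeping (the counts $Q_\ell = 2P_\ell$ or $P_\ell$ according to the splitting in $k$, the role of $S''$ for $\ell \equiv -1 \pmod p$, and the substitution of the Ferrero--Kida value of $\lambda_\emptyset$ when $p=2$) agrees with the paper's Lemma \ref{reduce}. But the proposal has a genuine gap precisely at the point you yourself flag as ``the main obstacle'': you assert, without argument, that the unit image has maximal rank so that the relation rank drops to $P'_{\max}$ (resp.\ $1+P^+_{\max}+\sum_{P\in\mathbb P}P$). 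This claim is equivalent to the finiteness of $X_{\{\ell\}}(\mathbb Q_\infty)$ for each $\ell \neq p$ (Theorem \ref{keythm}), which is the actual content of the paper and occupies all of its Section 3; deferring it means the theorem is not proved. Two further inputs your plan silently needs are also absent: that $\varprojlim E(\mathbb B_n)\otimes\mathbb Z_p \simeq \varLambda$ is \emph{cyclic}, generated by Sinnott's circular units (Lemma \ref{limE}), and that the tame semi-local module at $\ell$ is $\varLambda/f_\ell(T)$ for the explicit polynomial $f_\ell$ (Lemma \ref{limO}). Without cyclicity, ``the minimum dictated by the obvious relations'' is not even well-defined, and it is the lcm of the $f_\ell$ --- not any reciprocity or product-formula relation ``concentrated at the prime with the largest decomposition group'' --- that produces the subtracted terms $\deg f(T) = P'_{\max}$ and $P^+_{\max}+\sum_{P\in\mathbb P}P$.

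Moreover, the mechanism you sketch for invoking Brumer would not work as stated. You propose to control ``how many $\mathbb Z_p$-linear relations the $p$-adic logarithms of these units satisfy at the primes above $S$''; but the primes in $S$ are prime to $p$, the local contributions there are the residue groups $\varprojlim (O_{k_n}/\ell)^\times \otimes \mathbb Z_p$, and there is no $p$-adic logarithm on them to which Baker--Brumer could apply. The paper's key maneuver is to transfer the problem to the prime $p$: it enlarges $S$ to $\{p,\ell\}$, shows $X_{\{p,\ell\}}(\mathbb Q_\infty)\simeq \mathbb Z_p^{\oplus P_\ell}$ (Lemma \ref{Xpell}), builds a global Kummer extension $K$ from $p$-power roots of $\alpha^{e_n\sigma}$, where $\alpha$ generates $\mathfrak L^{(p-1)h}$ in the decomposition field $F$ of $\ell$, proves $K/\mathbb Q_\infty$ is abelian (Lemma \ref{K}), and applies Brumer at $p$ to the logarithms $\log_p\alpha^\sigma$ --- note one needs independence over all of $\overline{\mathbb Q}$, not merely over $\overline{\mathbb Q}\cap\mathbb Q_p$ as you state, since the character values $\chi(\sigma)$ in the group determinant are roots of unity generally outside $\mathbb Q_p$ --- to show the local Kummer group $\mathrm{Gal}(K_p/\mathbb Q_p(\mu_{p^\infty}))$ already has rank $P_\ell$ (Lemma \ref{K_p}). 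This forces the decomposition group $D_p$ to have full rank in $X_{\{p,\ell\}}(\mathbb Q_\infty)$, while $D_p/I_p$ is killed by $f_\ell(0)\neq 0$, whence $X_{\{\ell\}}(\mathbb Q_\infty)\simeq X_{\{p,\ell\}}(\mathbb Q_\infty)/I_p$ is finite. None of this indirection (auxiliary ramification at $p$, the global-to-local Kummer duality, the abelianness of $K/\mathbb Q_\infty$, the finite-index trapping $U^{p^{n+b}}\cap\overline V \subset \overline V^{p^n}$) appears in your plan, and without some substitute for it your central rank claim remains unsupported.
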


\begin{remark}{\em 
If $p=3$ and $k=\mathbb Q(\sqrt{-3})$, then $\lambda_{\emptyset}(k_{\infty})=0$ and any $\ell \in S^{\prime}$ splits in $k$. The invariants $\lambda_{\emptyset}(k_{\infty})$ are also computable via the Stickelberger elements (cf., e.g., \cite{IS96}, \cite{Was}). 
}\end{remark}

In Section 2, we recall some basic facts, and reduce Theorems \ref{mainthm} and \ref{imagquad} to the finiteness of $X_{\{\ell\}}(\mathbb Q_{\infty})$ (Lemma \ref{reduce}). In Section 3, we prove the finiteness of $X_{\{\ell\}}(\mathbb Q_{\infty})$ as a key theorem (Theorem \ref{keythm}) by using Brumer's $p$-adic version \cite{Brumer} of Baker's theorem (cf.\,\cite{Was} Theorem 5.29). Then we also give an application of the key theorem to finite fields (Proposition \ref{finitefield}). Finally, we give an upper bound of $|X_{\{\ell\}}(\mathbb Q_{\infty})|$ in a special case where $p=3$. 

\section{Preliminaries}

Put $\zeta_{n}=\exp(2\pi\sqrt{-1}/n)$ for each $1 \le n \in \mathbb Z$. Put $\mathbb B_n=\mathbb Q_{\infty} \cap \mathbb Q(\zeta_{2p^{n+1}})$ which is the $n$-th layer of the $\mathbb Z_p$-extension $\mathbb Q_{\infty}/\mathbb Q$, i.e., the unique subextension of degree $p^n$. For a finite extension $k/\mathbb Q$ such that $k \cap \mathbb Q_{\infty} = \mathbb Q$, $k_n=k \mathbb B_n$ is the $n$-th layer of the cyclotomic $\mathbb Z_p$-extension $k_{\infty}/k$. Then we identify $\mathrm{Gal}(k_{\infty}/k)$ with $\varGamma=\mathrm{Gal}(\mathbb Q_{\infty}/\mathbb Q)$ via the restriction mapping. For the cyclotomic $\mathbb Z_p$-extension $\mathbb Q(\mu_{p^{\infty}})=\bigcup_{n \ge 1}\mathbb Q(\zeta_{p^n})$ of $\mathbb Q(\zeta_{2p})$, we take the generator $\overline{\gamma}$ of $\mathrm{Gal}(\mathbb Q(\mu_{p^{\infty}})/\mathbb Q(\zeta_{2p}))$ such that $\zeta_{p^n}^{\overline{\gamma}}=\zeta_{p^n}^{1+q}$ for all $n$, where $q=p$ if $p \neq 2$, and $q=4$ if $p=2$. Then $\gamma=\overline{\gamma}|_{\mathbb Q_{\infty}}$ is a generator of $\varGamma=\mathrm{Gal}(\mathbb Q_{\infty}/\mathbb Q)$. The fixed field of $\varGamma^{p^n}$ is the $n$-th layer of the $\mathbb Z_p$-extension. By identifying $\gamma$ with $1+T$, we can regard a $\mathbb Z_p[[\varGamma]]$-module as a module over the ring $\varLambda=\mathbb Z_p[[T]]$ of formal power series. 

Let $k/\mathbb Q$ be a finite extension such that $k \cap \mathbb Q_{\infty} = \mathbb Q$. Let $O_{k_n}$ be the ring of algebraic integers in $k_n$, and $E(k_n)=O_{k_n}^{\times}$ the unit group. For a finite set $S$ of prime numbers, $L_S(k_n)$ denotes the maximal abelian pro-$p$-extension of $k_n$ unramified outside $S$. If $p \not\in S$, we denote by $A_S(k_n)$ the Sylow $p$-subgroup of the ray class group of $k_n$ modulo $\mathfrak{m}=\prod_{\ell \in S} \ell$. In particular, $A_{\emptyset}(k_n)$ is the Sylow $p$-subgroup of the ideal class group of $k_n$. Then we obtain the exact sequence 
\begin{equation}\tag{\thesection.1}\label{ray0}
E(k_n) \otimes_{\mathbb Z} \mathbb Z_p \rightarrow (O_{k_n}/\mathfrak{m})^{\times} \otimes_{\mathbb Z} \mathbb Z_p \rightarrow A_S(k_n) \rightarrow A_{\emptyset}(k_n) \rightarrow 0
\end{equation}
for each $n$. Since $\mathrm{Gal}(L_S(k_n)/k_n) \simeq A_S(k_n)$ by class field theory, we also obtain the exact sequence 
\begin{equation}\tag{\thesection.2}\label{ray}
\varprojlim E(k_n) \otimes_{\mathbb Z} \mathbb Z_p \rightarrow \varprojlim (O_{k_n}/\mathfrak{m})^{\times} \otimes_{\mathbb Z} \mathbb Z_p \rightarrow X_S(k_{\infty}) \rightarrow X_{\emptyset}(k_{\infty}) \rightarrow 0
\end{equation}
of Galois modules by taking the projective limit $\varprojlim$ of the sequence (\ref{ray0}) with respect to the norm mappings. Since no prime ideals of $k$ split completely in the cyclotomic $\mathbb Z_p$-extension $k_{\infty}/k$, $\varprojlim (O_{k_n}/\mathfrak{m})^{\times} \otimes_{\mathbb Z} \mathbb Z_p$ is finitely generated as a $\mathbb Z_p$-module. Therefore, by the theorem of Ferrero and Washington \cite{FW79} (cf.\ \cite{Was} Theorem 7.15, Proposition 13.23), the tamely ramified Iwasawa module $X_S(k_{\infty})$ is finitely generated as a $\mathbb Z_p$-module if $k/\mathbb Q$ is abelian. 

For prime numbers $\ell \neq p$, we define the polynomials $f_{\ell}(T) \in \varLambda$ as follows: 
\[
f_{\ell}(T)=
\left\{\begin{array}{ll}
(1+T)^{P_{\ell}}-(1+p)^{P_{\ell}} & : p \neq 2,\ \ell \equiv 1\ (\mathrm{mod}\,p), \\
(1+T)^{P_{\ell}}-(-1)^{\frac{\ell-1}{2}} 5^{P_{\ell}} & : p=2, \\
1 & : \ell \not\equiv 1\ (\mathrm{mod}\,p). 
\end{array}\right.
\]

\begin{lemma}\label{limO}
For $\ell \neq p$, $\varprojlim (O_{\mathbb B_n}/\ell)^{\times} \otimes_{\mathbb Z} \mathbb Z_p \simeq \varLambda/f_{\ell}(T)$ as a $\varLambda$-module. 
\end{lemma}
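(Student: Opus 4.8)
The plan is to analyze the finite modules $V_n := (O_{\mathbb B_n}/\ell)^{\times}\otimes_{\mathbb Z}\mathbb Z_p$ level by level and then pass to the inverse limit. Since $\ell\neq p$ is unramified in $\mathbb B_n/\mathbb Q$, the ideal $\ell O_{\mathbb B_n}$ is squarefree, so $O_{\mathbb B_n}/\ell$ is a product of the residue fields $\mathbb F_{\mathfrak l}$ at the primes $\mathfrak l\mid\ell$, and $V_n$ is the product of their $p$-Sylow subgroups $(\mathbb F_{\mathfrak l}^{\times})\otimes\mathbb Z_p$, permuted transitively by $\varGamma_n=\mathrm{Gal}(\mathbb B_n/\mathbb Q)$. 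First I would dispose of the case $\ell\not\equiv 1\pmod p$ (which occurs only for odd $p$): here the residue degree is a power of $p$, so each residue field is $\mathbb F_{\ell^{p^{s}}}$ with $\ell^{p^{s}}\equiv\ell\not\equiv 1\pmod p$ by Fermat, whence $p\nmid\ell^{p^{s}}-1=|\mathbb F_{\ell^{p^{s}}}^{\times}|$. Thus $V_n=0$ for every $n$ and $\varprojlim V_n=0=\varLambda/(1)$, as claimed.

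For the remaining case ($\ell\equiv 1\pmod p$ with $p$ odd, or $p=2$) I would first pin down the splitting of $\ell$. Using that the Frobenius of $\ell$ in $\varGamma$ is the projection $\langle\ell\rangle$ of $\ell$ into $1+q\mathbb Z_p$ via the cyclotomic character, a lifting-the-exponent computation identifies $N_{\ell}$ with $v_p(\langle\ell\rangle-1)-1$ (resp.\ $-2$ for $p=2$) and shows that $\ell$ splits completely in $\mathbb B_{N_{\ell}}$ and is totally inert above it. Hence for $n\ge N_{\ell}$ there are exactly $P_{\ell}$ primes, each with residue field $\mathbb F_{\ell^{p^{n-N_{\ell}}}}$, and $V_n$ is the induced module $\mathbb Z_p[\varGamma_n]\otimes_{\mathbb Z_p[D_n]}W_n$, where $D_n=\mathrm{Gal}(\mathbb B_n/\mathbb B_{N_{\ell}})$ is the decomposition group and $W_n=(\mathbb F_{\ell^{p^{n-N_{\ell}}}}^{\times})\otimes\mathbb Z_p$ is the cyclic $p$-group on which $D_n$ acts through its Frobenius generator, i.e.\ by the power map $x\mapsto x^{\ell}$. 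In particular $V_n$ is a cyclic $\varLambda$-module generated by $v_n:=1\otimes w_n$ for a generator $w_n$ of $W_n$.

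The technical heart is the action of $\gamma^{P_{\ell}}=(1+T)^{P_{\ell}}$ on $v_n$. Since $\gamma$ corresponds to $1+q$ under the cyclotomic character while the Frobenius corresponds to $\langle\ell\rangle=(1+q)^{t}$ with $v_p(t)=N_{\ell}$, the element $\gamma^{P_{\ell}}$ is a unit power of the Frobenius; unwinding this shows that $\gamma^{P_{\ell}}$ acts on $W_n$ as the scalar $(1+p)^{P_{\ell}}$ when $p$ is odd, and as $(-1)^{(\ell-1)/2}5^{P_{\ell}}$ when $p=2$, the sign surviving precisely because the relevant exponent $u^{-1}$ is a $p$-adic unit, hence odd. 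Therefore $f_{\ell}(T)\,v_n=0$, giving a surjection $\varLambda/(f_{\ell})\twoheadrightarrow V_n$ factoring through $\varLambda/(f_{\ell},\omega_n)$ with $\omega_n=(1+T)^{p^{n}}-1$. As $f_{\ell}$ is a distinguished polynomial of degree $P_{\ell}$, one has $\varLambda/(f_{\ell})\cong\mathbb Z_p^{P_{\ell}}$, on which $\gamma^{p^{n}}$ acts as the scalar $(1+p)^{p^{n}}$ (resp.\ $5^{2^{n}}$); thus $\omega_n$ acts as $(1+p)^{p^{n}}-1$ (resp.\ $5^{2^{n}}-1$), of valuation $n+1$ (resp.\ $n+2$). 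Comparing with $|V_n|$ forces the surjection $\varLambda/(f_{\ell},\omega_n)\to V_n$ to be an isomorphism.

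Finally I would assemble the limit. The norm maps $V_{n+1}\to V_n$ are surjective $\varLambda$-homomorphisms (surjectivity of the norm on finite fields), so a Nakayama-type argument modulo $(p,T)$ lets me choose the generators $v_n$ norm-compatibly; the isomorphisms $V_n\cong\varLambda/(f_{\ell},\omega_n)$ then intertwine the norm maps with the natural projections. Passing to $\varprojlim$ and using $\bigcap_n(f_{\ell},\omega_n)=(f_{\ell})$ — equivalently that $\varLambda/(f_{\ell})\cong\mathbb Z_p^{P_{\ell}}$ is the inverse limit of its quotients by the scalars $\omega_n\to 0$ — yields $\varprojlim V_n\cong\varLambda/(f_{\ell})$. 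I expect the main obstacle to be the Frobenius computation in the third paragraph: matching the residue-field power map with the chosen generator $\gamma\leftrightarrow 1+q$ and controlling the sign at $p=2$, everything else being bookkeeping with distinguished polynomials and surjective transition maps.
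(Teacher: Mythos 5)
Your argument is correct, and it shares the paper's overall skeleton (dispose of $\ell\not\equiv 1\bmod p$ by a residue-degree congruence; show the module is cyclic over $\varLambda$; show $f_{\ell}(T)$ annihilates it; conclude by a size comparison), but it executes the two technical steps by a genuinely different route. For the annihilation step, the paper passes to the decomposition field $F$ of $\ell$ in $\mathbb Q(\mu_{p^{\infty}})/\mathbb Q$ and identifies the $p$-part of each residue field $(O_{\mathbb B_n}/\mathfrak l^{\gamma^i})^{\times}\otimes_{\mathbb Z}\mathbb Z_p$ with $\langle\zeta_{qp^n}\rangle$ as a Galois module, so that the action of $\gamma_F$ (restricting to $\gamma^{P_{\ell}}$) is read off directly from the cyclotomic character; you instead stay inside $\mathbb B_n$ and write $\gamma^{P_{\ell}}$ as a unit power $\mathrm{Frob}^u$ of the Frobenius in the procyclic decomposition group, so that it acts by the exponent $\ell^u$, which equals $\langle\ell\rangle^u=(1+p)^{P_{\ell}}$ for odd $p$ (here you implicitly use that $\omega(\ell)=1$ since $\ell\equiv 1\bmod p$ --- worth making explicit) and equals $(-1)^{(\ell-1)/2}5^{P_{\ell}}$ for $p=2$ because the unit $u$ is odd; your sign analysis is exactly right and arguably more transparent than the paper's description of the slightly awkward field $F=\mathbb Q(\zeta_{2^{N_{\ell}+3}}-\zeta_{2^{N_{\ell}+3}}^{-1})$ in the case $\ell\equiv -1\bmod 4$. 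For the final identification, the paper argues once at the level of the limit: the surjection $\varLambda/f_{\ell}(T)\rightarrow\varprojlim(O_{\mathbb B_n}/\ell)^{\times}\otimes_{\mathbb Z}\mathbb Z_p$ is between free $\mathbb Z_p$-modules of the same rank $P_{\ell}$, hence an isomorphism; you instead prove finite-level isomorphisms $V_n\simeq\varLambda/(f_{\ell},\omega_n)$ by an order count (your valuations $n+1$, resp.\ $n+2$, do match $|V_n|$ via the lifting-the-exponent computation) and must then pay for this with the norm-compatible choice of generators, which your Nakayama argument modulo $(p,T)$, together with the surjectivity of norms between finite fields, does legitimately close. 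The paper's rank comparison is shorter; your level-by-level version costs the compatibility bookkeeping but yields strictly more information, namely the structure of each finite layer $V_n$ as $\varLambda/(f_{\ell},\omega_n)$.
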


\begin{proof}
The decomposition field of $\ell$ in $\mathbb Q_{\infty}/\mathbb Q$ is $\mathbb B_{N_{\ell}}$. Since $|(O_{\mathbb B_n}/\ell)^{\times}|=(\ell^{p^{n-N_{\ell}}}-1)^{P_{\ell}} \equiv \ell-1 \pmod{p}$, the claim holds if $\ell \not\equiv 1 \pmod{p}$. 

Suppose that $\ell \equiv 1 \pmod{p}$. Let $F$ be the decomposition field of $\ell$ in $\mathbb Q(\mu_{p^{\infty}})/\mathbb Q$. Then $F=\mathbb Q(\zeta_{2^{N_{\ell}+3}}-\zeta_{2^{N_{\ell}+3}}^{-1})$ if $p=2$ and $\ell \equiv -1 \pmod{4}$, and $F=\mathbb Q(\zeta_{qp^{N_{\ell}}})$ otherwise. Let $\mathfrak{L}$ be a prime ideal of $F$ lying over $\ell$, and put $\mathfrak{l}=\mathfrak{L} \cap O_{\mathbb B_{N_{\ell}}}$. Then $(O_{\mathbb B_n}/\ell)^{\times} \simeq \prod_{i=0}^{P_{\ell}-1} (O_{\mathbb B_n}/\mathfrak{l}^{\gamma^i})^{\times}$ for $n \ge N_{\ell}$. 
Let $\eta_0$ be a generator of $\varprojlim (O_{\mathbb B_n}/\mathfrak{l})^{\times} \otimes_{\mathbb Z} \mathbb Z_p \simeq \mathbb Z_p$. Then $\varprojlim (O_{\mathbb B_n}/\ell)^{\times} \otimes_{\mathbb Z} \mathbb Z_p \simeq \prod_{i=0}^{P_{\ell}-1} \varprojlim (O_{\mathbb B_n}/\mathfrak{l}^{\gamma^i})^{\times} \otimes_{\mathbb Z} \mathbb Z_p$ is a cyclic $\varLambda$-module generated by $\eta \leftrightarrow (\eta_0,1,\cdots,1)$. Let $\mathfrak{L}^{\gamma^i}$ be a prime ideal of $F$ lying over $\mathfrak{l}^{\gamma^i}$. Then $O_{\mathbb B_{N_{\ell}}}/\mathfrak{l}^{\gamma^i} \simeq O_F/\mathfrak{L}^{\gamma^i} \simeq \mathbb F_{\ell}$ and 
\[
(O_{\mathbb B_n}/\mathfrak{l}^{\gamma^i})^{\times} \otimes_{\mathbb Z} \mathbb Z_p \simeq (\mathbb Z[\zeta_{qp^n}]/\mathfrak{L}^{\gamma^i})^{\times} \otimes_{\mathbb Z} \mathbb Z_p \simeq \langle \zeta_{qp^n}\,\mathrm{mod}\,\mathfrak{L}^{\gamma^i} \rangle \otimes_{\mathbb Z} \mathbb Z_p \simeq \langle \zeta_{qp^n} \rangle
\]
for all $n>N_{\ell}$ as $\mathrm{Gal}(\mathbb Q(\mu_{p^{\infty}})/F)$-modules. Then $\gamma_F \in \mathrm{Gal}(\mathbb Q(\mu_{p^{\infty}})/F) \simeq \varGamma^{P_{\ell}}$ such that $\gamma_F|_{\mathbb Q_{\infty}}=\gamma^{P_{\ell}}$ satisfies $\zeta_{p^{n+1}}^{\gamma_F}=\zeta_{p^{n+1}}^{(1+p)^{P_{\ell}}}$ if $p \neq 2$, and $\zeta_{2^{n+2}}^{\gamma_F}=\zeta_{2^{n+2}}^{\pm 5^{P_{\ell}}}$ if $p=2$ and $\ell \equiv \pm 1 \pmod{4}$ respectively. Therefore $f_{\ell}(T)$ annihilates $\varprojlim (O_{\mathbb B_n}/\ell)^{\times} \otimes_{\mathbb Z} \mathbb Z_p$. Then the homomorphism $\varLambda/f_{\ell}(T) \rightarrow \varprojlim (O_{\mathbb B_n}/\ell)^{\times} \otimes_{\mathbb Z} \mathbb Z_p : 1\,\mathrm{mod}\,f_{\ell}(T) \mapsto \eta$ is an isomorphism, since the both sides are isomorphic to ${\mathbb Z_p}^{\!\oplus P_{\ell}}$ as $\mathbb Z_p$-modules. 
\end{proof}

For the convenience, we give the proof of the following lemma which is well known. 

\begin{lemma}\label{limE}
$\mathcal{E}=\varprojlim E(\mathbb B_n) \otimes_{\mathbb Z} \mathbb Z_p \simeq \varLambda$ as a $\varLambda$-module. 
\end{lemma}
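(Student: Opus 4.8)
The plan is to compute $\mathcal E$ by identifying it with the augmentation ideal of $\varLambda$, first pinning down the rational structure together with the behaviour of the norm maps, and then using cyclotomic units to upgrade this to an integral statement in the limit. Write $G_n=\mathrm{Gal}(\mathbb B_n/\mathbb Q)=\varGamma/\varGamma^{p^n}$ and $U_n=E(\mathbb B_n)\otimes_{\mathbb Z}\mathbb Z_p$. For the rational structure I would invoke Dirichlet's unit theorem: since $\mathbb B_n$ is totally real of degree $p^n$ and Galois over $\mathbb Q$, the group $G_n$ acts simply transitively on its $p^n$ real places, so fixing one real place identifies the logarithmic embedding with a $G_n$-equivariant map $U_n\to\mathbb R[G_n]$ whose image spans the trace-zero hyperplane $I_{G_n}\otimes_{\mathbb Z_p}\mathbb R$, where $I_{G_n}=\ker(\mathbb Z_p[G_n]\to\mathbb Z_p)$ is the augmentation ideal. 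Hence $U_n\otimes_{\mathbb Z_p}\mathbb Q_p\cong I_{G_n}\otimes_{\mathbb Z_p}\mathbb Q_p$, of dimension $p^n-1$. A short computation with this embedding shows the norm map $U_{n+1}\to U_n$ is carried to the natural projection $\mathbb R[G_{n+1}]\to\mathbb R[G_n]$; since $\varprojlim\mathbb Z_p[G_n]=\varLambda$ under these projections and $\varprojlim I_{G_n}=(T)\cong\varLambda$, this already gives that $\mathcal E$ has $\varLambda$-rank $1$ and indicates the expected answer. (For $p=2$ the torsion $\{\pm1\}\otimes\mathbb Z_2$ of $U_n$ is annihilated by $N_{\mathbb B_{n+1}/\mathbb B_n}$, hence disappears in the limit, so $\mathcal E$ is $\mathbb Z_p$-torsion-free.)

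To obtain freeness integrally I would produce an explicit $\varLambda$-generator from cyclotomic units. Let $C(\mathbb B_n)\subseteq E(\mathbb B_n)$ be the group of cyclotomic units. Two standard inputs make this work: (a) the module of cyclotomic units of the cyclic field $\mathbb B_n$ is cyclic over $\mathbb Z_p[G_n]$, generated by a suitable unit $c_n$, and the $c_n$ may be chosen compatibly with the norm maps using the distribution relations of the $1-\zeta_{p^{n+1}}$; and (b) the index $[E(\mathbb B_n):C(\mathbb B_n)]$ equals the class number of $\mathbb B_n$, whose $p$-part is $|A_{\emptyset}(\mathbb B_n)|$. By Iwasawa's theorem the unramified Iwasawa module $X_{\emptyset}(\mathbb Q_{\infty})=\varprojlim A_{\emptyset}(\mathbb B_n)$ vanishes (equivalently $p\nmid h(\mathbb B_n)$ for all $n$; cf.\ \cite{Iwa81}, \cite{Was}), so this index is prime to $p$ and $C(\mathbb B_n)\otimes_{\mathbb Z}\mathbb Z_p=U_n$. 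Consequently each $U_n$ is generated over $\mathbb Z_p[G_n]$ by the image of $c_n$, and the surjections $\mathbb Z_p[G_n]\twoheadrightarrow U_n$ sending $1\mapsto c_n$ are compatible with the transition maps (norm on the left, natural projection on the right).

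Finally I would pass to the limit. Because all modules involved are compact, surjectivity is preserved under $\varprojlim$, so sending $1$ to the norm-coherent system $c=(c_n)_n$ defines a surjection $\varLambda=\varprojlim\mathbb Z_p[G_n]\twoheadrightarrow\mathcal E$. Thus $\mathcal E\cong\varLambda/J$ for some ideal $J$; but $\mathcal E$ surjects onto the $U_n$, which (modulo finite $2$-torsion) are $\mathbb Z_p$-free of unbounded rank $p^n-1$, whereas any quotient $\varLambda/J$ with $J\neq 0$ is $\varLambda$-torsion and hence has $\mathbb Z_p$-torsion-free quotients of bounded rank. Therefore $J=0$ and $\mathcal E\cong\varLambda$.

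I would expect the main obstacle to be the integral input of the second paragraph: proving that the cyclotomic units form a $\mathbb Z_p[G_n]$-cyclic module with a norm-coherent generator, and controlling $[E(\mathbb B_n):C(\mathbb B_n)]$ well enough to know it is prime to $p$. The latter is exactly where the vanishing of $X_{\emptyset}(\mathbb Q_{\infty})$ enters; without it one would only recover a $\varLambda$-submodule of finite index in $\mathcal E$ rather than equality. The rational computation and the passage to the limit are comparatively routine.
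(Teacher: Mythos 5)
Your proposal is correct and takes essentially the same route as the paper: a norm-coherent circular unit $c_n$ (the paper's explicit $\xi_n$, built from $1-\zeta_{p^{n+1}}$) generates $E(\mathbb B_n)\otimes_{\mathbb Z}\mathbb Z_p$ over $\mathbb Z_p[\varGamma/\varGamma^{p^n}]$ because the index $[E(\mathbb B_n):C(\mathbb B_n)]$ is prime to $p$ (Sinnott for $p\neq 2$, Washington's Theorem 8.2 for $p=2$, combined with $p\nmid h(\mathbb B_n)$), which yields a surjection $\varLambda\twoheadrightarrow\mathcal E$ that must be an isomorphism since the $\mathbb Z_p$-ranks of the $E(\mathbb B_n)\otimes_{\mathbb Z}\mathbb Z_p$ are unbounded. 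The only differences are cosmetic: the paper disposes of the $p=2$ torsion at finite level via $N_{\mathbb B_n/\mathbb Q}\xi_n=-1$ rather than in the limit, and your opening rational computation (and the slight over-claim $[E:C]=h$, which Sinnott's formula gives only up to a power of $2$) plays no essential role.
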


\begin{proof}
For an abelian extension $k/\mathbb B_n$, we denote by $N_{k/\mathbb B_n}$ the norm mapping. Put 
\[
\xi_n=\bigg\{
\begin{array}{ll}
N_{\mathbb Q(\zeta_{p^{n+1}})/\mathbb B_n} 
\big( \zeta_{p^{n+1}}^{(1-(1+p)\omega)/2} (1-\zeta_{p^{n+1}}^{(1+p)\omega})/(1-\zeta_{p^{n+1}}) \big) & : p \neq 2, \\
\zeta_{2^{n+2}}^{-2} (1-\zeta_{2^{n+2}}^5)/(1-\zeta_{2^{n+2}}) & : p=2
\end{array}
\]
where $\omega$ is a primitive ($p-1$)-th root of unity in $\mathbb Z_p$. Then $\xi_n \in E(\mathbb B_n)$ and $N_{\mathbb B_{n+1}/\mathbb B_n} \xi_{n+1}=\xi_n$ for all $n \ge 0$. For a moment, we suppose that $p \neq 2$. Let $C(\mathbb Q(\zeta_{p^{n+1}}))$ (resp.\ $C(\mathbb B_n)$) be the group of circular units of $\mathbb Q(\zeta_{p^{n+1}})$ (resp.\ $\mathbb B_n$) defined by Sinnott \cite{Sin80}. Then $\langle \zeta_{p^{n+1}} \rangle C(\mathbb Q(\zeta_{p^{n+1}}))$ is the group of cyclotomic units of $\mathbb Q(\zeta_{p^{n+1}})$ defined in \cite{Was}. Since $N_{\mathbb Q(\zeta_{p^{n+1}})/\mathbb B_n} \zeta_{p^{n+1}}=1$, Lemma 8.1 (b) and Proposition 8.11 of \cite{Was} yield that $N_{\mathbb Q(\zeta_{p^{n+1}})/\mathbb B_n} C(\mathbb Q(\zeta_{p^{n+1}}))$ is generated by one element $\xi_n$ as a $\mathbb Z[\varGamma/\varGamma^{p^n}]$-module. Since $C(\mathbb B_n)^{p-1} \subset N_{\mathbb Q(\zeta_{p^{n+1}})/\mathbb B_n} C(\mathbb Q(\zeta_{p^{n+1}})) \subset C(\mathbb B_n)$ and $|E(\mathbb B_n)/C(\mathbb B_n)|$ is prime to $p$ by Theorems 4.1 and 5.1 of \cite{Sin80}, $E(\mathbb B_n) \otimes_{\mathbb Z}\mathbb Z_p$ is generated by $\xi_n \otimes 1$ as a $\mathbb Z_p[\varGamma/\varGamma^{p^n}]$-module. If $p=2$, we have $N_{\mathbb B_n/\mathbb Q} \xi_n = -1$. Then Theorem 8.2 and Proposition 8.11 of \cite{Was} yield that $\xi_n \otimes 1$ generates $E(\mathbb B_n) \otimes_{\mathbb Z}\mathbb Z_2$ as a $\mathbb Z_2[\varGamma/\varGamma^{2^n}]$-module. Therefore $\mathcal{E}$ is a cyclic $\varLambda$-module. Since the $\mathbb Z_p$-rank of $E(\mathbb B_n) \otimes_{\mathbb Z}\mathbb Z_p$ is unbounded as $n \rightarrow \infty$, we have $\mathcal{E} \simeq \varLambda$. 
\end{proof}

The main theorems are reduced to the finiteness of $X_{\{\ell\}}(\mathbb Q_{\infty})$ as follows. 

\begin{lemma}\label{reduce}
If $X_{\{\ell\}}(\mathbb Q_{\infty})$ is finite for any $\ell \neq p$, then both Theorems \ref{mainthm} and \ref{imagquad} hold. 
\end{lemma}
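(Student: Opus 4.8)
The plan is to run everything through the four-term exact sequence (\ref{ray}) and extract $\lambda_S$ as the $\mathbb Z_p$-rank of a cokernel. All four terms of (\ref{ray}) are finitely generated over $\mathbb Z_p$ (for the middle two this is the remark after (\ref{ray}), and $X_\emptyset(k_\infty),X_S(k_\infty)$ are finitely generated by Ferrero--Washington), so ranks are additive along the associated short exact sequence and (\ref{ray}) yields
\[
\lambda_S(k_\infty)=\lambda_\emptyset(k_\infty)+\operatorname{rank}_{\mathbb Z_p}\varprojlim(O_{k_n}/\mathfrak m)^\times\!\otimes\mathbb Z_p-\operatorname{rank}_{\mathbb Z_p}\big(\operatorname{im}\varprojlim E(k_n)\otimes\mathbb Z_p\big).
\]
I first compute the middle term. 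By the Chinese remainder theorem it is $\prod_{\ell\in S}\varprojlim(O_{k_n}/\ell)^\times\otimes\mathbb Z_p$; for $k=\mathbb Q$ Lemma \ref{limO} identifies this with $\prod_{\ell\in S}\varLambda/f_\ell(T)$, and for imaginary quadratic $k$ I would prove the analogue by decomposing $O_{k_n}/\ell$ according to the splitting of $\ell$ in $k$ and tracking the $\mathrm{Gal}(k/\mathbb Q)$-action: a split $\ell$ gives two copies of $\varLambda/f_\ell$ interchanged by conjugation, an inert $\ell\equiv1\bmod p$ or a ramified one gives a single conjugation-invariant copy, and an inert $\ell\equiv-1\bmod p$ gives a single conjugation-anti-invariant copy, on which the $p$-part lives in the norm-one subgroup of the residue field.

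Next I treat the unit image, which is where the hypothesis enters. By Lemma \ref{limE}, $\mathcal E\cong\varLambda$ is cyclic, so its image is the cyclic $\varLambda$-submodule generated by the tuple $(v_\ell)_\ell$ of reductions of a generator. Now $X_{\{\ell\}}(\mathbb Q_\infty)$ is exactly the cokernel of $\varLambda\xrightarrow{\,v_\ell\,}\varLambda/f_\ell$ (using $X_\emptyset(\mathbb Q_\infty)=0$ by Iwasawa), and since $f_\ell$ is distinguished with $\varLambda/f_\ell$ free of rank $\deg f_\ell$, a colon-ideal computation in the UFD $\varLambda$ gives $\operatorname{Ann}(v_\ell)=(f_\ell/\gcd(f_\ell,\tilde v_\ell))$ for a lift $\tilde v_\ell$; hence $X_{\{\ell\}}(\mathbb Q_\infty)$ is finite iff $\gcd(f_\ell,\tilde v_\ell)=1$ iff $\operatorname{Ann}(v_\ell)=(f_\ell)$. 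Granting the hypothesis for every $\ell$, the annihilator of the whole tuple is $\bigcap_\ell(f_\ell)=(\operatorname{lcm}_\ell f_\ell)$, so the unit image is $\cong\varLambda/(\operatorname{lcm}_\ell f_\ell)$ and its rank equals $\deg\operatorname{lcm}_\ell f_\ell$. Thus $\lambda_S$ is pinned down once this degree is evaluated.

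Evaluating the degree is a root count. Each $f_\ell$ is separable with roots $T=(1+p)\zeta-1$ (resp.\ $T=5\zeta-1$), where $\zeta$ ranges over $\mu_{P_\ell}$ when $\ell\equiv1$, or over the solutions of $\zeta^{P_\ell}=-1$ when $p=2,\ \ell\equiv-1\bmod4$, so $\deg\operatorname{lcm}$ is the number of distinct $\zeta$ that occur. For $p\ne2$ the sets $\mu_{P_\ell}$ are nested, so $\operatorname{lcm}=f_{\ell_0}$ for the largest $P_\ell$ and the degree is $P'_{\max}$, giving Theorem \ref{mainthm}(i); feeding this into the displayed formula together with the split/inert bookkeeping of the second paragraph gives Theorem \ref{imagquad}(i), the constant $\delta$ arising because for $k=\mathbb Q(\sqrt{-3})$ the anti-invariant part of $\mathcal E$ is the rank-one module of $p$-power roots of unity and supplies one extra relation. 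For $p=2$ the roots attached to $S^-$ are \emph{primitive} $2^{N_\ell+1}$-th roots of unity, disjoint for distinct $P_\ell$ and absorbed into the $S^+$-roots precisely when $P_\ell<P^+_{\max}$; counting the union gives $\deg\operatorname{lcm}=P^+_{\max}+\sum_{P\in\mathbb P}P$, hence Theorem \ref{mainthm}(ii).

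The step I expect to be hardest is Theorem \ref{imagquad}(ii). At $p=2$ the idempotents $\tfrac12(1\pm\tau)$ attached to the generator $\tau$ of $\mathrm{Gal}(k/\mathbb Q)$ are unavailable, so the clean ``$+$ like $\mathbb Q$, $-$ free'' dichotomy of the $p\ne2$ case breaks down, and I must instead analyze $\varprojlim(O_{k_n}/\ell)^\times\otimes\mathbb Z_2$ as a $\varLambda$-module directly for split, inert and ramified $\ell$. Moreover the ramified primes $S_k$ enter not through the middle term but through the unramified module $X_\emptyset(k_\infty)$ via genus theory, so here one must make $\lambda_\emptyset(k_\infty)$ itself explicit; this is the origin of the term $\sum_{\ell\in S_k\setminus S}P_\ell$, while the doubling $2\sum_{\ell\in S}P_\ell$ reflects that each odd $\ell$ now contributes through both conjugate factors, and the constant $-1$ comes from the global unit $\{\pm1\}$ of the imaginary quadratic field. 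Even in this last case the finiteness hypothesis is used only in the form $\operatorname{Ann}(v_\ell)=(f_\ell)$ on the norm part, so no input beyond the assumed finiteness of $X_{\{\ell\}}(\mathbb Q_\infty)$ is needed.
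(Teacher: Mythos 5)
Your treatment of Theorem \ref{mainthm} is correct, and it is essentially the paper's own argument in different clothing: where you compute $\mathrm{Ann}(v_\ell)=(f_\ell/\gcd(f_\ell,\tilde v_\ell))$ in the UFD $\varLambda$ and deduce from the hypothesis that the annihilator of the whole tuple is $(\mathrm{lcm}_\ell f_\ell)$, the paper deduces injectivity of $\eta_\ell:\mathcal E/f_\ell(T)\to\varprojlim(O_{\mathbb B_n}/\ell)^{\times}\otimes_{\mathbb Z}\mathbb Z_p$ from the equality of $\mathbb Z_p$-ranks of these free modules and composes with the diagonal map $\mathcal E/f(T)\to\bigoplus_\ell\mathcal E/f_\ell(T)$; both routes identify the unit image with $\varLambda/(f)$, $f=\mathrm{lcm}_\ell f_\ell$, and your root count (nested $\mu_{P_\ell}$ for $p\neq2$; primitive $2P_\ell$-th roots of unity absorbed into the $S^+$-roots exactly when $P_\ell<P^+_{\max}$ for $p=2$) correctly reproduces the paper's explicit formulae for $f(T)$.

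The genuine gap is Theorem \ref{imagquad}, where your proposal is a plan rather than a proof, and the plan points in a partly wrong direction at exactly the step you flag as hardest. First, you never establish the two facts the rank identity rests on: the $\varLambda[J]$-decomposition $\varprojlim E(k_n)\otimes_{\mathbb Z}\mathbb Z_p\simeq W\oplus\mathcal E$ (with $W$ the limit of $p$-power roots of unity, nonzero only when $\zeta_q\in k_\infty$), and the injectivity of the combined map $\xi^-+\xi^+$ into $\bigoplus_\ell\varprojlim(O_{k_n}/\ell)^{\times}\otimes_{\mathbb Z}\mathbb Z_p$. You correctly observe that at $p=2$ the idempotents $\frac{1}{2}(1\pm J)$ are unavailable, but your proposed remedy --- analyzing the target modules prime by prime as $\varLambda$-modules --- does not control the unit \emph{image}: what is needed is $\mathrm{Im}\,\xi^+\cap\mathrm{Im}\,\xi^-=0$, and the paper obtains this with no idempotents at all, from the observation that $J$ acts as $+1$ on $\mathrm{Im}\,\xi^+$ and as $-1$ on $\mathrm{Im}\,\xi^-$, so that $2(\mathrm{Im}\,\xi^+\cap\mathrm{Im}\,\xi^-)=0$ inside a torsion-free module. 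Second, your bookkeeping for the constant $-1$ in (ii) is wrong: it does not come from the unit $\{\pm1\}$, whose inverse limit under the norms is trivial since $N_{k_{n+1}/k_n}(-1)=1$. In the paper it comes from the explicit Ferrero--Kida formula $\lambda_\emptyset(k_\infty)=-1+\sum_{\ell\in S_k}P_\ell$ when $\delta=0$, and from $\delta=\mathrm{rank}_{\mathbb Z_p}W=1$ (in which case $S_k=\emptyset$ and $\lambda_\emptyset(k_\infty)=0$) otherwise; these two cases must be separated and combined, and your sketch conflates them. Until the injectivity of $\xi^-+\xi^+$ and this case analysis are supplied, the identity $\lambda_S(k_\infty)=\lambda_\emptyset(k_\infty)+\sum_{\ell\in S'\cup S''}Q_\ell-\deg f(T)-\delta$, from which both parts of Theorem \ref{imagquad} follow by your (correct) counting of $Q_\ell$, is not established.
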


\begin{proof}
By Lemma \ref{limE}, $\mathcal{E} \simeq \varLambda$. By Lemma \ref{limO} and the sequence (\ref{ray}), we obtain the exact sequence 
\[
\mathcal{E}/f_{\ell}(T) \stackrel{\eta_{\ell}}{\longrightarrow} \varprojlim (O_{\mathbb B_n}/\ell)^{\times} \otimes_{\mathbb Z} \mathbb Z_p \rightarrow X_{\{\ell\}}(\mathbb Q_{\infty}) 
\]
for each $\ell \neq p$. Since the free $\mathbb Z_p$-modules $\mathcal{E}/f_{\ell}(T)$ and $\varprojlim (O_{\mathbb B_n}/\ell)^{\times} \otimes_{\mathbb Z} \mathbb Z_p$ have the same rank, the finiteness of $X_{\{\ell\}}(\mathbb Q_{\infty})$ yields the injectivity of $\eta_{\ell}$. Then we obtain the injective homomorphisms
\[
\mathcal{E}/f(T) \stackrel{\xi}{\longrightarrow} \bigoplus_{\ell \in S} \mathcal{E}/f_{\ell}(T) \stackrel{\oplus \eta_{\ell}}{\longrightarrow} \bigoplus_{\ell \in S} \varprojlim (O_{\mathbb B_n}/\ell)^{\times} \otimes_{\mathbb Z} \mathbb Z_p \stackrel{\oplus \iota_{\ell}}{\longrightarrow} \bigoplus_{\ell \in S} \varprojlim (O_{k_n}/\ell)^{\times} \otimes_{\mathbb Z} \mathbb Z_p
\]
where $\xi$ is the diagonal mapping, $f(T)=\mathrm{lcm}\{\,f_{\ell}(T)\ |\ \ell \in S\,\}$, and $\iota_{\ell}$ is the natural mapping induced from the inclusions $\mathbb B_n \subset k_n$. 

Since $X_{\emptyset}(\mathbb Q_{\infty})=0$, the sequence (\ref{ray}) induces the exact sequence
\[
0 \rightarrow \mathcal{E}/f(T) \stackrel{\oplus \eta_{\ell} \circ \xi}{\longrightarrow} \bigoplus_{\ell \in S} \varprojlim (O_{\mathbb B_n}/\ell)^{\times} \otimes_{\mathbb Z} \mathbb Z_p \rightarrow X_S(\mathbb Q_{\infty}) \rightarrow 0 . 
\]
By Lemma \ref{limO}, we have $\lambda_S(\mathbb Q_{\infty}) = \sum_{\ell \in S} \deg f_{\ell}(T) - \deg f(T)$. If $p \neq 2$, then $\sum_{\ell \in S} \deg f_{\ell}(T)=\sum_{\ell \in S^{\prime}} P_{\ell}$ and $f(T)=(1+T)^{P_{\max}^{\prime}}-(1+p)^{P_{\max}^{\prime}}$. If $p=2$, then $\deg f_{\ell}(T)=P_{\ell}$ for any $\ell \in S$, and 
\[
f(T) = ((1+T)^{P_{\max}^+} - 5^{P_{\max}^+}) \prod_{P \in \mathbb P} ((1+T)^{P} + 5^{P}) . 
\]
Thus we obtain the claims of Theorem \ref{mainthm}. 

Let $Q_{\ell}$ be the number of places of $k_{\infty}$ lying over $\ell$. Put $S^{\prime}=\{\,\ell \in S\ |\ \ell \equiv 1\pmod{p}\,\}$ and $S^{\prime\prime}=\{\,\ell \in S\ |\ \ell \equiv -1\pmod{p},\ \hbox{\rm $\ell$ is inert in $k$}\,\}$ even if $p=2$. Then $\varprojlim (O_{k_n}/\ell)^{\times} \otimes_{\mathbb Z} \mathbb Z_p$ is a free $\mathbb Z_p$-module of rank $Q_{\ell}$ or $0$ if $\ell \in S^{\prime} \cup S^{\prime\prime}$ or not respectively. Let $J$ be a generator of $\mathrm{Gal}(k_{\infty}/\mathbb Q_{\infty})$ which acts as the complex conjugation. Put $W=\{1\}$ if $\zeta_{q} \not\in k_{\infty}$, and put $W=\varprojlim \langle \mu_{qp^n} \rangle \simeq \mathbb Z_p$ if $\zeta_q \in k_{\infty}$. Then the natural $\varLambda[J]$-homomorphism 
\[
\xi^- : W \rightarrow \bigoplus_{\ell \in S} \varprojlim (O_{k_n}/\ell)^{\times} \otimes_{\mathbb Z} \mathbb Z_p
\]
is also injective. Note that $\varprojlim E(k_n) \otimes_{\mathbb Z} \mathbb Z_p \simeq W \oplus \mathcal{E}$ as $\varLambda[J]$-modules. Recall that $\xi^+ = (\oplus \iota_{\ell}) \circ (\oplus \eta_{\ell}) \circ \xi$ is an injective $\varLambda[J]$-homomorphism. By the exact sequence (\ref{ray}), we obtain the following commutative diagram with exact rows. 
\[
\begin{array}{cccl}
\varprojlim E(k_n) \otimes_{\mathbb Z} \mathbb Z_p &\longrightarrow& \displaystyle{\bigoplus_{\ell \in S} \varprojlim (O_{k_n}/\ell)^{\times} \otimes_{\mathbb Z} \mathbb Z_p} & \rightarrow X_S(k_{\infty}) \rightarrow X_{\emptyset}(k_{\infty}) \rightarrow 0 \\
\downarrow && \parallel & \\
W \oplus \mathcal{E}/f(T) &\stackrel{\xi^- + \xi^+}{\longrightarrow}& \displaystyle{\bigoplus_{\ell \in S} \varprojlim (O_{k_n}/\ell)^{\times} \otimes_{\mathbb Z} \mathbb Z_p} & 
\end{array}
\]
Since $J$ acts on $\mathrm{Im}\,\xi^{\pm}$ as $\pm 1$ respectively, $2(\mathrm{Im}\,\xi^+ \cap \mathrm{Im}\,\xi^-)=0$ and hence $\mathrm{Im}\,\xi^+ \cap \mathrm{Im}\,\xi^- =0$. Therefore $\xi^- + \xi^+$ is injective, and hence 
\[
\lambda_S(k_{\infty}) = \lambda_{\emptyset}(k_{\infty}) + \sum_{\ell \in S^{\prime} \cup S^{\prime\prime}} Q_{\ell} - \deg f(T) - \delta 
\]
where $\delta=\mathrm{rank}_{\mathbb Z_p}W$. 
Suppose that $p \neq 2$. Then $Q_{\ell}=2P_{\ell}$ if $\ell$ splits in $k/\mathbb Q$, and $Q_{\ell}=P_{\ell}$ otherwise. Thus we obtain the claim (i) of Theorem \ref{imagquad}. 
Suppose that $p=2$. Then $S^{\prime\prime} \subset S^{\prime}=S$. For each $\ell \in S$, $Q_{\ell}=2P_{\ell}$ if $\ell \not\in S_k$, and $Q_{\ell}=P_{\ell}$ otherwise. By the formula of Ferrero \cite{Fer80} and Kida \cite{Kida79}, we have $\lambda_{\emptyset}(k_{\infty})=-1+\sum_{\ell \in S_k} P_{\ell}$ if $\delta=0$. If $\delta=1$, then $\lambda_{\emptyset}(k_{\infty})=0$ and $S_k=\emptyset$. By combining them, we obtain the claim (ii) of Theorem \ref{imagquad}. This completes the proof of Lemma \ref{reduce}. 
\end{proof}

\section{Proofs of theorems}

In this section, we complete the proofs of Theorems \ref{mainthm} and \ref{imagquad}. By Lemma \ref{reduce}, it suffices to show the following theorem which is a special case of Theorem \ref{mainthm}. 

\begin{theorem}\label{keythm}
$X_{\{\ell\}}(\mathbb Q_{\infty})$ is finite for any $\ell \neq p$. 
\end{theorem}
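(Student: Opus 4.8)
The plan is to deduce the finiteness from the two preceding structural lemmas together with the nonvanishing of a family of $p$-adic logarithmic expressions, the latter being where Brumer's theorem enters. Combining Lemma \ref{limE}, Lemma \ref{limO} and the exact sequence (\ref{ray}) for $k=\mathbb Q$ and $S=\{\ell\}$, and using $X_{\emptyset}(\mathbb Q_{\infty})=0$, I would first present $X_{\{\ell\}}(\mathbb Q_{\infty})$ as the cokernel of the reduction map
\[
\eta_{\ell} : \mathcal{E}/f_{\ell}(T) \longrightarrow \varprojlim (O_{\mathbb B_n}/\ell)^{\times} \otimes_{\mathbb Z} \mathbb Z_p
\]
already appearing in Lemma \ref{reduce}. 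If $\ell \not\equiv 1 \pmod p$ with $p \neq 2$, then $f_{\ell}(T)=1$, the target is $0$, and there is nothing to prove. In the remaining cases both sides are free $\mathbb Z_p$-modules of the same rank $\deg f_{\ell}(T)=P_{\ell}$ by Lemmas \ref{limE} and \ref{limO}; hence $X_{\{\ell\}}(\mathbb Q_{\infty})$ is finite if and only if $\eta_{\ell}\otimes_{\mathbb Z_p}\mathbb Q_p$ is injective, equivalently $\det \eta_{\ell} \neq 0$. This is precisely the converse of the implication used in Lemma \ref{reduce}.

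Next I would diagonalize $\eta_{\ell}$ after $\otimes\,\mathbb Q_p$. Since $\mathcal{E}\simeq\varLambda$, the map $\eta_{\ell}$ is multiplication by the class $g_{\ell}$ of the universal cyclotomic unit $\xi=(\xi_n)_n$ in $\varLambda/f_{\ell}(T)$, and $(\varLambda/f_{\ell}(T))\otimes\mathbb Q_p \simeq \prod_{\chi} \overline{\mathbb Q}_p(\chi)$, the product being over the characters $\chi$ of $\varGamma$ sending $\gamma$ to a root of $f_{\ell}$. These roots are $\zeta\,(1+q)$ with $\zeta$ a suitable $p$-power root of unity, so that $\chi$ is the product of the scaling $\gamma\mapsto 1+q$ by a finite character $\psi$ of $\varGamma$ of $p$-power order (dividing $P_{\ell}$, with the sign twist recorded by $f_{\ell}$ in the case $p=2$, $\ell\equiv -1 \pmod 4$). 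Thus finiteness is reduced to showing $\chi(g_{\ell})\neq 0$ for every such $\chi$.

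The heart of the argument is the evaluation of $\chi(g_{\ell})$. Using the explicit generators $\xi_n$ of Lemma \ref{limE} and the identification $\varprojlim (O_{\mathbb B_n}/\mathfrak{l}^{\gamma^i})^{\times}\otimes\mathbb Z_p \simeq \langle \zeta_{qp^n}\rangle$ of Lemma \ref{limO}, I would carry out the norm-and-reduction computation for the $\chi$-component of the reduction of $\xi$, and then identify the resulting quantity, up to an explicit nonzero factor depending on $\ell$ and $\chi$, with a $\overline{\mathbb Q}_p$-linear combination $\sum_a \psi(a)\,\log_p(1-\zeta^a)$ of $p$-adic logarithms of cyclotomic numbers (that is, with a $p$-adic $L$-value of $\psi$). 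For the trivial $\psi$ the corresponding quantity reduces to $\log_p \ell$, which is nonzero since $\ell$ is a rational prime and hence not a root of unity. For nontrivial $\psi$ this is exactly where Brumer's $p$-adic analogue of Baker's theorem is invoked: the relevant logarithms $\log_p(1-\zeta^a)$ are $\mathbb Q$-linearly independent (equivalently, the associated cyclotomic units are $\mathbb Z$-independent modulo torsion), so by Brumer's theorem they remain linearly independent over $\overline{\mathbb Q}_p$, whence the combination with the nonzero algebraic coefficients $\psi(a)$ cannot vanish. This forces $\chi(g_{\ell})\neq 0$ for all $\chi$, hence $\eta_{\ell}$ injective and $X_{\{\ell\}}(\mathbb Q_{\infty})$ finite.

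I expect the main obstacle to be this middle step: converting the purely residual (mod $\ell$) description of $\eta_{\ell}$ supplied by Lemma \ref{limO} into a genuine $p$-adic logarithmic expression to which Brumer's theorem applies. Concretely one must (i) perform the norm and reduction computation for the specific units $\xi_n$ and track it through the isomorphism with $\langle\zeta_{qp^n}\rangle$; (ii) verify that the explicit factor attached to $\ell$ does not vanish for the characters $\psi$ that actually occur, namely those trivial on the decomposition group of $\ell$ (order dividing $P_{\ell}$), where the factor takes a special value; and (iii) run the computation uniformly across the three regimes $p\neq 2$, $p=2$ with $\ell\equiv 1\pmod 4$, and $p=2$ with $\ell\equiv -1\pmod 4$, where $f_{\ell}(T)$ and the auxiliary decomposition field $F$ of Lemma \ref{limO} differ. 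Once the expression is placed in logarithmic form, the appeal to Brumer (cf.\ \cite{Was} Theorem 5.29, \cite{Brumer}) is immediate.
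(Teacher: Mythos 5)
Your opening reduction is sound and matches the paper's set-up: since $\mathcal{E}/f_{\ell}(T)$ and $\varprojlim (O_{\mathbb B_n}/\ell)^{\times} \otimes_{\mathbb Z} \mathbb Z_p$ are free $\mathbb Z_p$-modules of the same rank and $X_{\emptyset}(\mathbb Q_{\infty})=0$, finiteness of $X_{\{\ell\}}(\mathbb Q_{\infty})=\mathrm{coker}\,\eta_{\ell}$ is indeed equivalent to injectivity of $\eta_{\ell}\otimes\mathbb Q_p$, and the diagonalization of $(\varLambda/f_{\ell}(T))\otimes\mathbb Q_p$ over the roots of $f_{\ell}$ is correct. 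The gap is exactly the step you yourself flag as the main obstacle, and it is not a technical hurdle but a dead end: the ``coordinates'' of $\eta_{\ell}$ are discrete logarithms in the residue fields at $\ell$ (writing $\xi_n \equiv \zeta_{qp^n}^{c_{n,i}} \bmod \mathfrak{l}^{\gamma^i}$ under the identification of Lemma \ref{limO}, the element $g_{\ell}$ is assembled from the exponents $c_{n,i}$), whereas Brumer's theorem constrains $\overline{\mathbb Q}$-linear relations among $p$-adic logarithms, i.e.\ the multiplicative $\mathbb Z_p$-span of algebraic numbers inside the local units \emph{at $p$}. There is no identity expressing a mod-$\ell$ discrete logarithm of a cyclotomic unit as $\sum_a \psi(a)\log_p(1-\zeta^a)$, and no known method produces one; such a sum is essentially a $p$-adic $L$-value, which cannot detect $p$-power residue symbols at primes above $\ell$. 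The paper's Section 4 makes the failure visible: for $p=3$ one has $|X_{\{\ell\}}(\mathbb Q_{\infty})|=3^{(m-1)/2}$ with $m=v_3(4\ell-s^2)$, $s=\mathrm{Tr}_{\mathbb Q(\zeta_3)/\mathbb Q}(\alpha)$ — the entire answer is governed by the $p$-adic position of the $\ell$-unit $\alpha$, i.e.\ it would sit inside the ``explicit nonzero factor depending on $\ell$'' that your argument leaves unspecified, so your appeal to Brumer would be carrying none of the load.

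The paper's actual proof circumvents this by a duality that exchanges the roles of $p$ and $\ell$, so that Brumer is applied to logarithms of $\ell$-units evaluated at $p$, not to residues of cyclotomic units at $\ell$. Concretely: enlarge $S$ to $\{p,\ell\}$, where Lemma \ref{Xpell} gives $X_{\{p,\ell\}}(\mathbb Q_{\infty}) \simeq {\mathbb Z_p}^{\!\oplus P_{\ell}}$ and $X_{\{\ell\}}(\mathbb Q_{\infty}) \simeq X_{\{p,\ell\}}(\mathbb Q_{\infty})/I_p$; since $f_{\ell}(0) \neq 0$ annihilates $D_p/I_p$, finiteness of $X_{\{\ell\}}(\mathbb Q_{\infty})$ reduces to showing the decomposition group $D_p$ at $p$ has full rank $P_{\ell}$. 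That rank is caught by the Kummer extension $K=\mathbb Q(\mu_{p^{\infty}})(\sqrt[p^n]{\alpha^{e_n\sigma}})$ with $\alpha$ a generator of $\mathfrak{L}^{(p-1)h}$, which is abelian over $\mathbb Q_{\infty}$ (Lemma \ref{K}) and hence lies in the relevant $\{p,\ell\}$-ramified extension, and whose \emph{local} Galois group $\mathrm{Gal}(K_p/\mathbb Q_p(\mu_{p^{\infty}})) \simeq {\mathbb Z_p}^{\!\oplus P_{\ell}}$ (Lemma \ref{K_p}) is where Brumer enters: the numbers $\log_p \alpha^{\sigma}$ ($\sigma \in G$) are $\mathbb Q$-linearly independent for the trivial reason that $\prod_{\sigma}\mathfrak{L}^{(p-1)hz_{\sigma}\sigma}=O_F$ forces $z_{\sigma}=0$, and Brumer upgrades this to $\overline{\mathbb Q}$-independence, giving $\alpha^{\mathbb Z_p[G]}\simeq \mathbb Z_p[G]$ in the local units at $p$ — no $L$-value computation anywhere. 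Your trivial-character observation that ``the quantity reduces to $\log_p \ell \neq 0$'' is in fact the trivial-character shadow of this correct computation, since $\sum_{\sigma}\log_p\alpha^{\sigma}=\log_p \ell^{(p-1)h}$. To repair your proof within your own frame, you would replace the analytic evaluation of $\chi(g_{\ell})$ by this class-field-theoretic and Kummer-theoretic identification of $\mathrm{coker}\,\eta_{\ell}$ (up to the finite group $D_p/I_p$, and up to the $2$-torsion kernel coming from infinite places when $p=2$) with $X_{\{p,\ell\}}(\mathbb Q_{\infty})/D_p$ — which is precisely the paper's proof.
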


In order to prove this theorem, we prepare some lemmas and notations. 

\begin{lemma}\label{Xpell}
$X_{\{p,\ell\}}(\mathbb Q_{\infty}) \simeq {\mathbb Z_p}^{\!\oplus P_{\ell}}$ as a $\mathbb Z_p$-module if $\ell \equiv 1 \pmod{p}$. 
\end{lemma}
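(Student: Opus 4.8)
The plan is to compute $X_{\{p,\ell\}}(\mathbb Q_{\infty})$ via the same machinery used for the tamely ramified case, but now exploiting that adjoining $p$ to the ramification set allows control at the prime $p$ itself. Since $\ell \equiv 1 \pmod p$, the local unit contributions at both $p$ and $\ell$ become accessible. First I would write down the analogue of the exact sequence (\ref{ray}) for the set $S=\{p,\ell\}$, namely
\[
\varprojlim E(\mathbb B_n) \otimes_{\mathbb Z} \mathbb Z_p \longrightarrow \varprojlim (O_{\mathbb B_n}/p\ell)^{\times} \otimes_{\mathbb Z} \mathbb Z_p \longrightarrow X_{\{p,\ell\}}(\mathbb Q_{\infty}) \longrightarrow X_{\emptyset}(\mathbb Q_{\infty}) \longrightarrow 0,
\]
and recall that $X_{\emptyset}(\mathbb Q_{\infty})=0$ so the map to $X_{\{p,\ell\}}(\mathbb Q_{\infty})$ is surjective.

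The key structural input is the behavior at $p$. Because $p$ is totally ramified in $\mathbb Q_\infty/\mathbb Q$ with a single prime above it in each layer, the factor $\varprojlim(O_{\mathbb B_n}/p)^\times \otimes \mathbb Z_p$ is controlled by the local unit structure at $p$; I would separate the module $\varprojlim(O_{\mathbb B_n}/p\ell)^\times \otimes \mathbb Z_p$ into its $p$-part and its $\ell$-part, identifying the $\ell$-part with $\varLambda/f_\ell(T) \simeq \varprojlim(O_{\mathbb B_n}/\ell)^\times \otimes \mathbb Z_p$ of rank $P_\ell$ via Lemma \ref{limO}. The crucial point is that when $p$ is in the ramification set, the global unit module $\mathcal E \simeq \varLambda$ (Lemma \ref{limE}) should map \emph{isomorphically} onto the $p$-local factor after tensoring with $\mathbb Q_p$, so that the cokernel picks up exactly the $\ell$-contribution. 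This reflects the fact that $X_{\{p\}}(\mathbb Q_\infty)=X_{\emptyset}(\mathbb Q_\infty)=0$: adding only $p$ produces nothing new, because Leopoldt-type behaviour over $\mathbb Q_\infty$ forces the cyclotomic units to fill up the local units at $p$ up to finite index.

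Concretely, I expect that the composite $\mathcal E \to \varprojlim(O_{\mathbb B_n}/p)^\times \otimes \mathbb Z_p$ has finite cokernel (indeed it is the statement $X_{\{p\}}(\mathbb Q_\infty)=0$ read through the sequence), so that modulo the $\ell$-part the image of $\mathcal E$ absorbs the $p$-part up to torsion. Then the surjection above, combined with the injectivity/finite-cokernel of the unit map on the $p$-component, yields
\[
X_{\{p,\ell\}}(\mathbb Q_{\infty}) \otimes_{\mathbb Z_p} \mathbb Q_p \simeq \bigl(\varprojlim (O_{\mathbb B_n}/\ell)^\times \otimes_{\mathbb Z} \mathbb Z_p\bigr) \otimes_{\mathbb Z_p} \mathbb Q_p,
\]
giving rank exactly $P_\ell$. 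To upgrade the rank equality to a clean isomorphism $X_{\{p,\ell\}}(\mathbb Q_\infty) \simeq \mathbb Z_p^{\oplus P_\ell}$, I would argue that $X_{\{p,\ell\}}(\mathbb Q_\infty)$ is torsion-free: since it surjects onto $X_{\emptyset}(\mathbb Q_\infty)=0$, it equals the cokernel of a map into a free $\mathbb Z_p$-module, and the finiteness of $X_{\{\ell\}}(\mathbb Q_\infty)$ (which will be Theorem \ref{keythm}) together with the structure at $p$ rules out any finite submodule.

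The main obstacle will be establishing precisely that the global-unit map is an isomorphism onto the $p$-local factor after inverting $p$ — equivalently, that adding the prime $p$ to the empty set contributes nothing to the Iwasawa module. This is the arithmetic heart: it is where the known result $X_{\{p\}}(\mathbb Q_\infty)=X_\emptyset(\mathbb Q_\infty)=0$ for $k=\mathbb Q$ (cited in the introduction as the well-understood case $p\in S$) must be invoked, and where one must be careful that the generator $\xi_n$ of the cyclotomic units maps to a $\varLambda$-generator of the local units at $p$ of the correct rank. Once that is in hand, the $\ell$-part splits off cleanly and the freeness of the resulting $\mathbb Z_p$-module of rank $P_\ell$ follows from the rank computation and torsion-freeness.
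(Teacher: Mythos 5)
Your overall route is in fact the paper's route: compare $S=\{p,\ell\}$ with $S=\{p\}$, use that the cyclotomic units fill up the semi-local units at $p$ (equivalently, that adding $p$ alone produces nothing: $L_{\{p\}}(\mathbb B_n)=\mathbb Q_{\infty}$ and $\varprojlim \mathrm{Gal}(L_{\{p\}}(\mathbb B_n)/\mathbb B_n)=0$), and conclude that only the $\ell$-local contribution survives. But two of your steps, as stated, would fail. First, your opening exact sequence is not valid once $p\in S$. The sequence (\ref{ray0}) computes the $p$-part of the ray class group of conductor $\mathfrak m=\prod_{\ell\in S}\ell$, and this equals $\mathrm{Gal}(L_S(\mathbb B_n)/\mathbb B_n)$ only because ramification at the primes in $S$ is tame. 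With $p$ in the set, ramification at $p$ is wild: abelian $p$-extensions of $\mathbb B_n$ unramified outside $p$ have conductor $p^m$ with $m$ unbounded (already $\mathbb B_{n+1}/\mathbb B_n$ does), so the ray class group modulo $p\ell$ --- and likewise your factor $\varprojlim (O_{\mathbb B_n}/p)^{\times}\otimes_{\mathbb Z}\mathbb Z_p$ --- sees only a truncation of the inertia at $p$, and the third term of your displayed sequence is not $X_{\{p,\ell\}}(\mathbb Q_{\infty})$. The paper instead uses the class-field-theoretic sequence $E(\mathbb B_n)\otimes_{\mathbb Z}\mathbb Z_p\rightarrow\prod_{v\mid p,\ell}U_{n,v}\otimes_{\mathbb Z}\mathbb Z_p\rightarrow\mathrm{Gal}(L_{\{p,\ell\}}(\mathbb B_n)/\mathbb B_n)\rightarrow 0$ with the \emph{full} local unit groups at $v\mid p$; its exactness in the middle requires Leopoldt's conjecture for $\mathbb B_n$ (an ingredient you never invoke), together with Theorem 11.2.4 of \cite{NSW2}, which identifies $\varprojlim\prod_{v\mid p}U_{n,v}\otimes_{\mathbb Z}\mathbb Z_p$ with a finite-index submodule of $\varLambda$. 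You would need to either work with these full local units or prove (you do not) that your truncated limit agrees with theirs.

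Second, your freeness argument has two defects. The assertion that $X_{\{p,\ell\}}(\mathbb Q_{\infty})$ is torsion-free because it is ``the cokernel of a map into a free $\mathbb Z_p$-module'' is false as a principle (multiplication by $p$ on $\mathbb Z_p$ has cokernel $\mathbb Z/p\mathbb Z$); and your fallback appeal to the finiteness of $X_{\{\ell\}}(\mathbb Q_{\infty})$ is circular, since Lemma \ref{Xpell} is itself an input to the paper's proof of Theorem \ref{keythm} (it supplies $\mathrm{rank}_{\mathbb Z_p}X_{\{p,\ell\}}(\mathbb Q_{\infty})=P_{\ell}$, which is compared with the rank of the decomposition group $D_p$). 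The paper avoids any torsion analysis by upgrading your ``finite cokernel after $\otimes\,\mathbb Q_p$'' to an exact isomorphism: $\mathcal E\simeq\varLambda$ (Lemma \ref{limE}) surjects onto $\varprojlim\prod_{v\mid p}U_{n,v}\otimes_{\mathbb Z}\mathbb Z_p$ because $\varprojlim\mathrm{Gal}(L_{\{p\}}(\mathbb B_n)/\mathbb B_n)=0$, and a surjection from $\varLambda$ onto a nonzero torsion-free $\varLambda$-module of rank one is automatically injective ($\varLambda$ is a domain, so a nonzero kernel would force the quotient to be torsion). Hence the $p$-local component is absorbed \emph{exactly}, and the snake lemma gives $X_{\{p,\ell\}}(\mathbb Q_{\infty})\simeq\varprojlim\prod_{v\mid\ell}U_{n,v}\otimes_{\mathbb Z}\mathbb Z_p\simeq\varLambda/f_{\ell}(T)\simeq{\mathbb Z_p}^{\!\oplus P_{\ell}}$ by Lemma \ref{limO}, freeness included. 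Repairing your write-up along these lines (full local units plus Leopoldt at each layer; isomorphism rather than finite cokernel) turns it into the paper's proof.
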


\begin{proof}
For a finite place $v$ of $\mathbb B_n$, we denote by $U_{n,v}$ the local unit group of the completion of $\mathbb B_n$ at $v$. Since Leopoldt's conjecture holds for $\mathbb B_n$, the class field theory provides the following commutative diagram with exact rows (cf.\ arguments of \S 13.1 \cite{Was}). 
\[
\begin{array}{cccccl}
E(\mathbb B_n) \otimes_{\mathbb Z} \mathbb Z_p &\rightarrow& \prod_{v|p,\ell} U_{n,v} \otimes_{\mathbb Z} \mathbb Z_p &\rightarrow& \mathrm{Gal}(L_{\{p,\ell\}}(\mathbb B_n)/\mathbb B_n) &\rightarrow 0 \\
\parallel && \downarrow \hbox{\scriptsize{proj.}} && \downarrow \hbox{\scriptsize{restr.}} & \\
E(\mathbb B_n) \otimes_{\mathbb Z} \mathbb Z_p &\stackrel{\iota_n}{\longrightarrow}& \prod_{v|p} U_{n,v} \otimes_{\mathbb Z} \mathbb Z_p &\rightarrow& \mathrm{Gal}(L_{\{p\}}(\mathbb B_n)/\mathbb B_n) &\rightarrow 0 
\end{array}
\]
Note that $L_{\{p\}}(\mathbb B_n)=\mathbb Q_{\infty}$ and $\varprojlim \mathrm{Gal}(L_{\{p\}}(\mathbb B_n)/\mathbb B_n) =0$. By Theorem (11.2.4) of \cite{NSW2}, $\varprojlim \prod_{v|p} U_{n,v} \otimes_{\mathbb Z} \mathbb Z_p$ is a submodule of $\varLambda$ of finite index, and hence $\varprojlim \iota_n$ is isomorphism. By taking $\varprojlim$ of the above diagram and using snake lemma, we have
\[
X_{\{p,\ell\}}(\mathbb Q_{\infty}) = \mathrm{Gal}(L_{\{p,\ell\}}(\mathbb Q_{\infty})/\mathbb Q_{\infty}) \simeq \hbox{$\varprojlim \prod_{v|\ell} U_{n,v} \otimes_{\mathbb Z} \mathbb Z_p$} \simeq {\mathbb Z_p}^{\!\oplus P_{\ell}} 
\]
as a $\mathbb Z_p$-module. 
\end{proof}

Let $\overline{\mathbb Q}$ (resp.\ $\overline{\mathbb Q}_p$) be the algebraic closure of $\mathbb Q$ (resp.\ $\mathbb Q_p$), and $\mathbb C_p$ the completion of $\overline{\mathbb Q}_p$. Fix an embedding $\overline{\mathbb Q} \hookrightarrow \mathbb C_p$, and let $v_p : \mathbb C_p \rightarrow \mathbb Q \cup \{\infty\}$ be the additive $p$-adic valuation such that $v_p(p)=1$. Let $F$ be the decomposition field of $\ell$ in $\mathbb Q(\mu_{p^{\infty}})/\mathbb Q$. Then $G=\mathrm{Gal}(F/\mathbb Q)$ can be regarded as a quotient of $\mathrm{Gal}(\mathbb Q_p(\mu_{p^{\infty}})/\mathbb Q_p)$. Let $\mathfrak{L}$ be a prime ideal of $F$ lying over $\ell$, and $h$ be the class number of $F$. Then $\mathfrak{L}^{(p-1) h}=\alpha O_F$ is a principal ideal of the ring $O_F$ of algebraic integers in $F$ generated by some $\alpha \in F^{\times}$ such that $v_p(\alpha-1)>0$. 
Put $\varDelta=\mathrm{Gal}(\mathbb Q(\mu_{p^{\infty}})/\mathbb Q_{\infty})$, and let $\omega : \varDelta \rightarrow \mathbb Z_p^{\times}$ be the Teichm\"uller character satisfying $\zeta_{p^n}^{\delta}=\zeta_{p^n}^{\omega(\delta)}$ for all $\delta \in \varDelta$ and $n \ge 1$. Put $e = \frac{1}{p-1} \sum_{\delta \in \varDelta} \omega(\delta) \delta^{-1} \in \mathbb Z_p[\varDelta]$, and choose $e_n \in \mathbb Z[\varDelta]$ such that $e_n \equiv e \pmod{p^n}$ for each $n \ge 1$. Put 
\[
K=\mathbb Q(\mu_{p^{\infty}})(\sqrt[p^n]{\alpha^{e_n \sigma}}\ |\ n \ge 1,\, \sigma \in G\,) \ \subset L_{\{p,\ell\}}(\mathbb Q(\mu_{p^{\infty}}))
\]
which is a Kummer extension of $\mathbb Q(\mu_{p^{\infty}})$. 

\begin{lemma}\label{K}
$K/\mathbb Q_{\infty}$ is abelian. 
\end{lemma}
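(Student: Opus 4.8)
The plan is to exploit the Kummer-theoretic description of $\mathrm{Gal}(K/\mathbb Q(\mu_{p^{\infty}}))$ together with the fact that $e$ cuts out the $\omega$-component of $\varDelta=\mathrm{Gal}(\mathbb Q(\mu_{p^{\infty}})/\mathbb Q_{\infty})$. Write $L=\mathbb Q(\mu_{p^{\infty}})$ and $A=\mathrm{Gal}(K/L)$. Since $\mu_{p^{\infty}}\subset L$, the extension $K/L$ is abelian, and via Kummer duality $A$ pairs perfectly with the subgroup $V\subset L^{\times}/(L^{\times})^{p^{\infty}}$ generated by the classes of the radicands $\alpha^{e_n\sigma}$. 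The group $\varDelta$ acts on $A$ by conjugation through the surjection $\mathrm{Gal}(K/\mathbb Q_{\infty})\twoheadrightarrow\varDelta$, and under the Kummer pairing this matches the natural action on $\mathrm{Hom}(V,\mu_{p^{\infty}})$ assembled from the $\varDelta$-actions on $V$ and on $\mu_{p^{\infty}}$. Since $\varDelta\simeq(\mathbb Z/q\mathbb Z)^{\times}$ is cyclic, it will then suffice to prove that this conjugation action is trivial: a central extension of a cyclic group by an abelian group is abelian.

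The key computation lives in the group ring. Reindexing $\sum_{\delta}\omega(\delta)\delta^{-1}$ one checks the eigen-relation $\delta\cdot e=\omega(\delta)\,e$ in $\mathbb Z_p[\varDelta]$ for every $\delta\in\varDelta$; this uses only the relation, not idempotency, so it remains valid when $p=2$. Since $e_n\equiv e\pmod{p^n}$, we get $\delta\,e_n\equiv\omega(\delta)\,e_n\pmod{p^n\mathbb Z_p[\varDelta]}$, and because $\sigma\in G$ commutes with $\varDelta$ on $F$ this yields
\[
\delta(\alpha^{e_n\sigma})\equiv(\alpha^{e_n\sigma})^{\omega(\delta)}\pmod{(L^{\times})^{p^n}}.
\]
Two consequences follow. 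First, the radical $V$ is stable under $\varDelta$, so $K/\mathbb Q_{\infty}$ is Galois. Second, $\varDelta$ acts on the level-$p^n$ part of $V$ through the character $\omega$, exactly as it acts on $\mu_{p^n}$.

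Finally I would feed this into the Kummer pairing $\langle\,\cdot\,,\cdot\,\rangle\colon A\times V\to\mu_{p^{\infty}}$. For a lift $g$ of $\delta$ and $\tau\in A$ one has $\langle g\tau g^{-1},v\rangle=\delta\bigl(\langle\tau,\delta^{-1}v\rangle\bigr)$; substituting $\delta^{-1}v=v^{\omega(\delta)^{-1}}$ in $V$ and $\delta(\zeta)=\zeta^{\omega(\delta)}$ on $\mu_{p^{\infty}}$, the two occurrences of $\omega(\delta)$ cancel, giving $\langle g\tau g^{-1},v\rangle=\langle\tau,v\rangle$ for all $v$. By non-degeneracy $g\tau g^{-1}=\tau$, so $A$ is central in $\mathrm{Gal}(K/\mathbb Q_{\infty})$. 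Generating $\mathrm{Gal}(K/\mathbb Q_{\infty})$ by $A$ and a single lift of a generator of the cyclic group $\varDelta$ then shows the group is abelian. I expect the main obstacle to be bookkeeping rather than ideas: making the mod-$p^n$ congruence interact correctly with the various $p^n$-th roots in the pairing, and confirming that the $\omega$-twists on $V$ and on $\mu_{p^{\infty}}$ genuinely cancel instead of reinforcing. The cyclicity of $\varDelta$ is what upgrades ``central'' to ``abelian'' and should not be overlooked.
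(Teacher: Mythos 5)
Your proposal is correct and is essentially the paper's own argument: the paper likewise rests on the eigen-relation $\delta e=\omega(\delta)e$ (in the form $\delta e_n\equiv\omega_n(\delta)e_n\pmod{p^n}$), and its direct verification that a lift $\delta$ of a generator of $\varDelta$ commutes with every $g\in\mathrm{Gal}(K/\mathbb Q(\mu_{p^{\infty}}))$ is exactly your pairing computation unwound on the generators $\sqrt[p^n]{\alpha^{e_n\sigma}}$, with the two $\omega$-twists cancelling in the same way. The only cosmetic difference is that you phrase the cancellation via the $\varDelta$-equivariance of the Kummer pairing and make explicit the radical-stability (Galoisness of $K/\mathbb Q_{\infty}$) and the cyclicity-of-$\varDelta$ step that the paper leaves implicit.
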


\begin{proof}
Let $\delta \in \mathrm{Gal}(K/\mathbb Q_{\infty})$ be an element such that $\delta|_{\mathbb Q(\mu_{p^{\infty}})}$ generates $\varDelta$. Let $\omega_n(\delta)$ be an integer satisfying $\omega_n(\delta) \equiv \omega(\delta) \pmod{p^n}$. Since $\delta e_n \equiv \delta e = \omega(\delta) e \equiv \omega_n(\delta) e_n \pmod{p^n}$, there exists some $\beta_n \in \mathbb Q(\mu_{p^{\infty}})$ such that $(\sqrt[p^n]{\alpha^{e_n \sigma}})^{\delta} = (\sqrt[p^n]{\alpha^{e_n \sigma}})^{\omega_n(\delta)} \beta_n$. Let $g \in \mathrm{Gal}(K/\mathbb Q(\mu_{p^{\infty}}))$ be an arbitrary element. Then $(\sqrt[p^n]{\alpha^{e_n \sigma}})^{g} = \zeta_{p^n}^{z_n} \sqrt[p^n]{\alpha^{e_n \sigma}}$ with some $z_n \in \mathbb Z$ for each $n \ge 1$. Since 
$(\sqrt[p^n]{\alpha^{e_n \sigma}})^{g \delta}
= \zeta_{p^n}^{\omega_n(\delta) z_n} (\sqrt[p^n]{\alpha^{e_n \sigma}})^{\omega_n(\delta)} \beta_n
= (\sqrt[p^n]{\alpha^{e_n \sigma}})^{\delta g}$, 
we have $g \delta=\delta g$. This implies that $K/\mathbb Q_{\infty}$ is abelian. 
\end{proof}

The $p$-adic completion of $K$ is the Kummer extension 
\[
K_p=\mathbb Q_p(\mu_{p^{\infty}})(\sqrt[p^n]{\alpha^{e \sigma}}\ |\ n \ge 1,\, \sigma \in G\,) 
\]
over $\mathbb Q_p(\mu_{p^{\infty}})$. Then we obtain the following key lemma. 

\begin{lemma}\label{K_p}
$\mathrm{Gal}(K_p/\mathbb Q_p(\mu_{p^{\infty}})) \simeq {\mathbb Z_p}^{\!\oplus P_{\ell}}$ if $\ell \equiv 1 \pmod{p}$. 
\end{lemma}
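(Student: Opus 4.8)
The plan is to compute $\mathrm{Gal}(K_p/\mathbb Q_p(\mu_{p^{\infty}}))$ by Kummer theory and then pin down its $\mathbb Z_p$-rank, the lower bound being the place where Brumer's theorem enters. Since $\mathbb Q_p(\mu_{p^{\infty}})$ contains all $p$-power roots of unity and $\alpha$ is a $1$-unit (so that each $\alpha^{e\sigma}$ makes sense as a $1$-unit), the extension $K_p$ is obtained by adjoining all $p^n$-th roots ($n \ge 1$) of the elements $\alpha^{e\sigma}$, $\sigma \in G$. Thus Kummer theory identifies $\mathrm{Gal}(K_p/\mathbb Q_p(\mu_{p^{\infty}}))$ with $\mathrm{Hom}_{\mathbb Z_p}(V,\mathbb Z_p(1))$, where $V$ is the $\mathbb Z_p$-submodule of the $1$-units of $\mathbb Q_p(\mu_{p^{\infty}})$ generated by $\{\alpha^{e\sigma} \mid \sigma \in G\}$. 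This Hom-module is automatically a free $\mathbb Z_p$-module of rank $\mathrm{rank}_{\mathbb Z_p} V$, so the whole statement reduces to proving $\mathrm{rank}_{\mathbb Z_p} V = P_{\ell}$.

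For the upper bound I would use the decomposition $G \simeq \varDelta \times (\varGamma/\varGamma^{P_{\ell}})$ coming from $F=\mathbb Q(\zeta_q)\cdot\mathbb B_{N_{\ell}}$ with $F \cap \mathbb Q_{\infty}=\mathbb B_{N_{\ell}}$. Writing $\sigma=\delta_0\tau$ with $\delta_0 \in \varDelta$ and using $e\delta_0=\omega(\delta_0)e$, one obtains $\alpha^{e\sigma}=(\alpha^{e\tau})^{\omega(\delta_0)}$ with $\omega(\delta_0) \in \mathbb Z_p^{\times}$; hence $V$ is already generated by the $P_{\ell}$ elements $\alpha^{e\tau}$, $\tau \in \varGamma/\varGamma^{P_{\ell}}$, and $\mathrm{rank}_{\mathbb Z_p} V \le P_{\ell}$.

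The heart of the matter is the reverse inequality, i.e.\ the $\mathbb Q_p$-linear independence of the $p$-adic logarithms $\log_p \alpha^{e\tau}$. Because $\ell$ splits completely in $F$, the conjugates $\alpha^{\sigma}$ ($\sigma \in G$) generate the $(p-1)h$-th powers of the $|G|$ distinct primes of $F$ above $\ell$, so no nontrivial product of them is a root of unity; equivalently $\log_p \alpha^{\sigma}$ ($\sigma \in G$) are linearly independent over $\mathbb Q$, and Brumer's theorem upgrades this to linear independence over $\overline{\mathbb Q}$. To pass from $\overline{\mathbb Q}$- to $\mathbb Q_p$-independence of the $P_{\ell}$ combinations $\log_p \alpha^{e\tau}$, I would diagonalize the $\varGamma/\varGamma^{P_{\ell}}$-action: for each character $\chi$ of $\varGamma/\varGamma^{P_{\ell}}$ form the component $y_{\chi}=\sum_{\tau}\chi(\tau)^{-1}\log_p \alpha^{e\tau}$. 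Expanding via $\log_p \alpha^{e\tau}=\tfrac{1}{p-1}\sum_{\delta}\omega(\delta)\log_p \alpha^{\delta^{-1}\tau}$ and running $\sigma=\delta^{-1}\tau$ over $G$, one sees that $y_{\chi}$ is a $\overline{\mathbb Q}$-linear combination of the independent $\log_p \alpha^{\sigma}$ with nonzero root-of-unity coefficients $\tfrac{1}{p-1}\chi(\tau)^{-1}\omega(\delta)$, whence $y_{\chi}\neq 0$ for every $\chi$ by the $\overline{\mathbb Q}$-independence. Since distinct $\chi$ yield eigenvectors for distinct eigencharacters, the $P_{\ell}$ nonzero vectors $y_{\chi}$ are linearly independent, so the span of $\{\log_p \alpha^{e\tau}\}$ has dimension $P_{\ell}$; as these all lie in a fixed finite extension of $\mathbb Q_p$ and the dimension of a span is insensitive to extension of scalars, the $\mathbb Q_p$-span has dimension $P_{\ell}$ as well. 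This gives $\mathrm{rank}_{\mathbb Z_p} V = P_{\ell}$ and the claimed isomorphism.

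The main obstacle is precisely this last passage: Brumer's theorem furnishes independence only over $\overline{\mathbb Q}$, whereas the relations to be excluded carry coefficients in $\mathbb Q_p$. The character decomposition is what converts the required $\mathbb Q_p$-statement into an algebraic-coefficient statement to which Brumer applies, and the two points demanding care are verifying the genuine multiplicative independence of the $\alpha^{\sigma}$ (so that Brumer's hypothesis holds) and the non-vanishing of each $y_{\chi}$. For $p=2$ the idempotent $e$ and the splitting of $G$ must be read off with the appropriate $\varDelta$, but the logarithmic-independence argument is unchanged.
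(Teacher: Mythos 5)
Your first half --- the computation of $\mathrm{rank}_{\mathbb Z_p}\overline{V}$ --- is correct and is in substance the paper's own argument: the ideal factorization $\mathfrak{L}^{(p-1)h}=\alpha O_F$ gives $\mathbb Q$-linear independence of the $\log_p\alpha^{\sigma}$, Brumer upgrades this to $\overline{\mathbb Q}$-independence, and your character diagonalization over $\varGamma/\varGamma^{P_{\ell}}$ is a repackaging of the Frobenius determinant relation (Washington, Lemma 5.26) which the paper applies over all of $G$ before cutting down by the idempotent $e$. (One small point of care: to conclude $\mathbb Q_p$-independence you must run the eigenvector argument over $\mathbb C_p$, not merely note $\overline{\mathbb Q}$-independence of the $y_{\chi}$; this is harmless, since nonzero eigenvectors for distinct characters are independent over any field containing the character values.)

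The genuine gap is the sentence asserting that Kummer theory identifies $\mathrm{Gal}(K_p/\mathbb Q_p(\mu_{p^{\infty}}))$ with $\mathrm{Hom}_{\mathbb Z_p}(V,\mathbb Z_p(1))$, ``automatically'' free of rank $\mathrm{rank}_{\mathbb Z_p}V$. Kummer duality pairs the Galois group not with $V$ but with the radical, i.e.\ the \emph{image} of $V\otimes_{\mathbb Z}\mathbb Q_p/\mathbb Z_p$ in $\mathbb Q_p(\mu_{p^{\infty}})^{\times}\otimes_{\mathbb Z}\mathbb Q_p/\mathbb Z_p$, and the corank of that image drops below $P_{\ell}$ if elements of $V$ acquire unbounded $p$-power divisibility in the field. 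This is a real threat in the present setting: the base field is an infinite extension containing $\mu_{p^{\infty}}$, and $p$-power roots of unity are exactly a source of infinite divisibility (the image of $\mu_{p^{\infty}}$ in the tensor product is zero); moreover, already at the finite level, $\overline{V}$ need not be saturated in the local units $U$. The paper spends the second and third paragraphs of its proof closing precisely this hole: first a matrix argument producing a constant $b$, independent of $n$, with $U^{p^{n+b}}\cap\overline{V}\subset\overline{V}^{p^n}$ (bounded saturation defect), and then the analysis of the two maps $\iota,\iota^{\prime}$ --- showing $\mathrm{Ker}\,\iota^{\prime}=0$ by the observation that if $x$ becomes a $p$-th power higher in the tower then $\mathbb Q_p(\zeta_{qp^{N_{\ell}+1}},\sqrt[p]{x})=\mathbb Q_p(\zeta_{qp^{N_{\ell}+2}})$ forces $x=\zeta_{qp^{N_{\ell}+1}}^i y^p$, and showing $p^b\,\mathrm{Ker}\,\iota=0$ via the saturation bound. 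Only after the kernel of $V\otimes_{\mathbb Z}\mathbb Q_p/\mathbb Z_p\rightarrow\mathbb Q_p(\mu_{p^{\infty}})^{\times}\otimes_{\mathbb Z}\mathbb Q_p/\mathbb Z_p$ is known to be finite does the radical equal $(\mathbb Q_p/\mathbb Z_p)^{\oplus P_{\ell}}$ and the nondegenerate pairing yield $\mathbb Z_p^{\oplus P_{\ell}}$. As written, your argument establishes $\mathrm{rank}_{\mathbb Z_p}\overline{V}=P_{\ell}$ and an upper bound $\mathrm{rank}_{\mathbb Z_p}\mathrm{Gal}(K_p/\mathbb Q_p(\mu_{p^{\infty}}))\le P_{\ell}$, but not the claimed isomorphism; you would need to supply the finiteness-of-kernel step to complete the proof.
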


\begin{proof}
Let $U=\{\, u \in \mathbb Q_p(\zeta_{qp^{N_{\ell}+1}})\ |\ v_p(u-1)>0\,\}$ be the group of principal local units of $\mathbb Q_p(\zeta_{qp^{N_{\ell}+1}})$, which is multiplicatively a $\mathbb Z_p[[\mathrm{Gal}(\mathbb Q_p(\mu_{p^{\infty}})/\mathbb Q_p)]]$-module. Since $F \subset \mathbb Q(\zeta_{qp^{N_{\ell}+1}})$, we have $\alpha \in U$. 

First, we prove the linearly independence of Kummer generators. Suppose that $\sum_{\sigma \in G} z_{\sigma} \log_p \alpha^{\sigma}=0$ with $z_{\sigma} \in \mathbb Z$, where $\log_p$ denotes the $p$-adic logarithm function. Since $\prod_{\sigma \in G} \alpha^{z_{\sigma}\sigma}$ is a root of unity, we have $\prod_{\sigma \in G} \mathfrak{L}^{(p-1)h z_{\sigma}\sigma}=O_F$ and so $z_{\sigma}=0$ for all $\sigma \in G$. Therefore $\log_p \alpha^{\sigma}$ $(\sigma \in G)$ are linearly independent over $\mathbb Q$. By the Brumer's $p$-adic version of Baker's theorem (\cite{Was} Theorem 5.29), $\log_p \alpha^{\sigma}$ $(\sigma \in G)$ are linearly independent over $\overline{\mathbb Q}\,(\subset \mathbb C_p)$. For any character $\chi : G \rightarrow \overline{\mathbb Q} \subset \mathbb C_p$, we have $\sum_{\sigma \in G} \chi(\sigma) \log_p \alpha^{\sigma} \neq 0$. Since 
\[
\det(\log_p \alpha^{\sigma \tau^{-1}})_{\sigma, \tau \in G} = \prod_{\chi \in \widehat{G}} \sum_{\sigma \in G} \chi(\sigma) \log_p \alpha^{\sigma} \neq 0
\]
by Lemma 5.26 of \cite{Was}, then $\alpha^{\sigma}$ ($\sigma \in G)$ are linearly independent in $U$ over $\mathbb Z_p$, and hence $U \supset \alpha^{\mathbb Z_p[G]} \simeq \mathbb Z_p[G]$. 
Since $\ell \equiv 1 \pmod{p}$, the restriction mapping $\varDelta \rightarrow G$ is injective, and hence there is an exact sequence $1 \rightarrow \varDelta \rightarrow G \rightarrow \varGamma/\varGamma^{P_{\ell}} \rightarrow 1$. Then we have $\overline{V}=\alpha^{e \mathbb Z_p[G]}=\alpha^{e \mathbb Z_p[\varGamma/\varGamma^{P_{\ell}}]} \simeq {\mathbb Z_p}^{\!\oplus P_{\ell}}$ as a $\mathbb Z_p$-module, which is the closure of $V=\alpha^{e \mathbb Z[\varGamma/\varGamma^{P_{\ell}}]} \simeq \mathbb Z^{\oplus P_{\ell}}$ in $U$. 

Now, we show the existence of $b$ such that $U^{p^{n+b}} \cap \overline{V} \subset \overline{V}^{p^n}$ for all $n \ge 1$. For sufficiently large $c$, $U^{p^c}$ is a free $\mathbb Z_p$-module of rank $d>P_{\ell}$. Let $u_1, \cdots, u_d$ be the basis of $U^{p^c}$ as a $\mathbb Z_p$-module. For each $\sigma \in G/\varDelta \simeq \varGamma/\varGamma^{P_{\ell}}$, $(\alpha^{e\sigma})^{p^c}=\prod_{i=1}^{d} {u_i}^{a_{\sigma,i}}$ with some $a_{\sigma,i} \in \mathbb Z_p$. Then the $P_{\ell} \times d$ matrix $A=(a_{\sigma,i})_{\sigma,i}$ has rank $P_{\ell}$, and hence there exists some $B \in GL_d(\mathbb Q_p)$ such that $AB=(E\ O)$, where $E$ is the $P_{\ell} \times P_{\ell}$ unit matrix, and $O$ is the $P_{\ell} \times (d-P_{\ell})$ zero matrix. There exists sufficiently large $b$ such that all components of $p^b B$ are $p$-adic integers. Suppose $x = \prod_{\sigma \in G/\varDelta} (\alpha^{e\sigma})^{z_{\sigma}} \in U^{p^{n+b}} \cap \overline{V}$. Then $(U^{p^c})^{p^{n+b}} \ni x^{p^c} = \prod_{\sigma \in G/\varDelta} ((\alpha^{e\sigma})^{p^c})^{z_{\sigma}} =\prod_{\sigma \in G/\varDelta} \prod_{i=1}^{d} {u_i}^{a_{\sigma,i} z_{\sigma}}$ and hence $(\cdots \,z_{\sigma} \cdots)A \equiv (\cdots \,0 \cdots) \pmod{p^{n+b}}$. Since $(\cdots \,p^b z_{\sigma} \cdots)AB = (\cdots \,z_{\sigma} \cdots)A p^b B \equiv (\cdots \,0 \cdots) \pmod{p^{n+b}}$, we have $z_{\sigma} \equiv 0 \pmod{p^n}$, i.e., $x \in \overline{V}^{p^n}$. Therefore $U^{p^{n+b}} \cap \overline{V} \subset \overline{V}^{p^n}$ where $b$ does not depend on $n$. 

The inclusions $V \subset \mathbb Q_p(\zeta_{qp^{N_{\ell}+1}})^{\times} \subset \mathbb Q_p(\mu_{p^{\infty}})^{\times}$ induce the homomorphisms 
\[
\iota^{\prime} \circ \iota : V \otimes_{\mathbb Z} \mathbb Q_p/\mathbb Z_p \stackrel{\iota}{\rightarrow} \mathbb Q_p(\zeta_{qp^{N_{\ell}+1}})^{\times} \otimes_{\mathbb Z} \mathbb Q_p/\mathbb Z_p \stackrel{\iota^{\prime}}{\rightarrow} \mathbb Q_p(\mu_{p^{\infty}})^{\times} \otimes_{\mathbb Z} \mathbb Q_p/\mathbb Z_p. 
\]
Suppose $\mathrm{Ker}\,\iota^{\prime} \neq 0$. Then there is some $x \otimes p^{-1} \in \mathrm{Ker}\,\iota^{\prime}$ such that $x \not\in {\mathbb Q_p(\zeta_{qp^{N_{\ell}+1}})^{\times}}^p$. Since $\mathbb Q_p(\zeta_{qp^{N_{\ell}+1}},\sqrt[p]{x})=\mathbb Q_p(\zeta_{qp^{N_{\ell}+2}})$, Kummer theory yields that $x=\zeta_{qp^{N_{\ell}+1}}^i y^p$ with some $i$ and $y \in \mathbb Q_p(\zeta_{qp^{N_{\ell}+1}})^{\times}$, and hence $x \otimes p^{-1}=0$. This is a contradiction. Hence $\iota^{\prime}$ is injective. Suppose $x \otimes p^{-n} \in \mathrm{Ker}\,\iota$ for $x \in V \subset \overline{V}$ and $n \ge 1$. Then there is some $u \in \mathbb Q_p(\zeta_{qp^{N_{\ell}+1}})^{\times}$ such that $x=u^{p^n}$. Since $0< v_p(u^{p^n}-1)=\sum_{i=0}^{p^n-1} v_p(u-\zeta_{p^n}^i)$, we have $v_p(u-\zeta_{p^n}^j)>0$ for some $j$. Then $v_p(u-1)=v_p(u-\zeta_{p^n}^j+\zeta_{p^n}^j-1)>0$ and hence $u \in U$. Since $x=u^{p^n} \in U^{p^n} \cap \overline{V}$, we have $x^{p^b} \in \overline{V}^{p^n}$. Since a $\mathbb Z$-basis of $V$ is a $\mathbb Z_p$-basis of $\overline{V}$, $x^{p^b} \in V^{p^n}$ and so $x^{p^b}=y^{p^n}$ with some $y \in V$. Then $p^b(x \otimes p^{-n})=x^{p^b} \otimes p^{-n}=y^{p^n} \otimes p^{-n} = y \otimes 0$ and hence $p^b \mathrm{Ker}\,\iota =0$. Since $V \otimes_{\mathbb Z} \mathbb Q_p/\mathbb Z_p \simeq (\mathbb Q_p/\mathbb Z_p)^{\oplus P_{\ell}}$, $\mathrm{Ker}\,\iota=\mathrm{Ker}(\iota^{\prime} \circ \iota)$ is finite and therefore $\mathrm{Im}(\iota^{\prime} \circ \iota) \simeq \mathrm{Im}\,\iota \simeq (\mathbb Q_p/\mathbb Z_p)^{\oplus P_{\ell}}$. Since the Kummer pairing 
\[
\mathrm{Gal}(K_p/\mathbb Q_p(\mu_{p^{\infty}})) \times \mathrm{Im}(\iota^{\prime} \circ \iota) \rightarrow \mu_{p^{\infty}}\ :\ (g, \iota^{\prime} \circ \iota(x \otimes p^{-n})) \mapsto \frac{(\sqrt[p^n]{x})^g}{\sqrt[p^n]{x}}
\]
is nondegenetate, we have $\mathrm{Gal}(K_p/\mathbb Q_p(\mu_{p^{\infty}})) \simeq \mathrm{Hom}(\mathrm{Im}\,\iota, \mu_{p^{\infty}}) \simeq {\mathbb Z_p}^{\!\oplus P_{\ell}}$ as $\mathbb Z_p$-modules. 
\end{proof}

Now we shall prove Theorem \ref{keythm}. 
\\[-3mm]%

\noindent{\it Proof of Theorem \ref{keythm}.}
Note that $X_{\emptyset}(\mathbb Q_{\infty})=0$. If $\ell \not\equiv 1 \pmod{p}$, $X_{\{\ell\}}(\mathbb Q_{\infty})=0$ by Lemma \ref{limO} and the sequence (\ref{ray}). In the following, we assume that $\ell \equiv 1 \pmod{p}$. 

Let $D_p$ (resp.\ $I_p$) be the decomposition (resp.\ inertia) subgroup of $X_{\{p,\ell\}}(\mathbb Q_{\infty})$ at the prime lying over $p$. Let $\mathbb Q_{\infty,p}$ be the $p$-adic completion of $\mathbb Q_{\infty}$. Then $D_p/I_p$ is identified with the Galois group of an unramified extension $L$ over $\mathbb Q_{\infty,p}$. Since $L=L^{\prime} \mathbb Q_{\infty,p}$ for a certain unramified extension $L^{\prime}/\mathbb Q_p$, $\varGamma=\mathrm{Gal}(\mathbb Q_{\infty}/\mathbb Q) \simeq \mathrm{Gal}(\mathbb Q_{\infty,p}/\mathbb Q_p)$ acts on $D_p/I_p$ trivially. In particular, $T=\gamma-1$ annihilates $D_p/I_p$. Since $X_{\emptyset}(\mathbb Q_{\infty})=0$, the sequence (\ref{ray}) induces the surjective homomorphism $\varprojlim (O_{\mathbb B_n}/\ell)^{\times} \otimes_{\mathbb Z} \mathbb Z_p \rightarrow X_{\{\ell\}}(\mathbb Q_{\infty})$ of $\varLambda$-modules. By Lemma \ref{limO}, $f_{\ell}(T)$ annihilates $X_{\{\ell\}}(\mathbb Q_{\infty})$. Since $D_p/I_p \subset X_{\{p,\ell\}}(\mathbb Q_{\infty})/I_p \simeq X_{\{\ell\}}(\mathbb Q_{\infty})$, we have $f_{\ell}(0)(D_p/I_p)=0$ and hence $D_p/I_p$ is finite. 

Assume that $p \neq 2$. Then $K \subset \mathbb Q(\mu_{p^{\infty}}) L_{\{p,\ell\}}(\mathbb Q_{\infty})$ by Lemma \ref{K}, and hence we obtain the following commutative diagrams. 
\[
\begin{array}{ccccc}
X_{\{p,\ell\}}(\mathbb Q_{\infty}) &\twoheadrightarrow& \mathrm{Gal}(K \cap L_{\{p,\ell\}}(\mathbb Q_{\infty})/\mathbb Q_{\infty}) &\simeq& \mathrm{Gal}(K/\mathbb Q(\mu_{p^{\infty}})) \\
\rotatebox[origin=c]{90}{$\subset$} && \rotatebox[origin=c]{90}{$\subset$} && \rotatebox[origin=c]{90}{$\hookrightarrow$} \\
D_p &\twoheadrightarrow& D_p|_{K \cap L_{\{p,\ell\}}(\mathbb Q_{\infty})} &\simeq& \mathrm{Gal}(K_p/\mathbb Q_p(\mu_{p^{\infty}}))
\end{array}
\]
By Lemmas \ref{Xpell} and \ref{K_p}, we have $\mathrm{rank}_{\mathbb Z_p} X_{\{p,\ell\}}(\mathbb Q_{\infty}) = \mathrm{rank}_{\mathbb Z_p} D_p$. Therefore $X_{\{p,\ell\}}(\mathbb Q_{\infty})/D_p$ is finite. 

Assume that $p=2$. Then $K \subset L_{\{2,\ell,\infty\}}(\mathbb Q_{\infty})$ by Lemma \ref{K}. Let $D_2^{\prime}$ be the decomposition subgroup of $X_{\{2,\ell,\infty\}}(\mathbb Q_{\infty})$ at the prime lying over $2$. Then we obtain the following commutative diagrams. 
\[
\begin{array}{ccccccc}
X_{\{2,\ell\}}(\mathbb Q_{\infty}) &\stackrel{\varphi}{\twoheadleftarrow}& X_{\{2,\ell,\infty\}}(\mathbb Q_{\infty}) &\twoheadrightarrow& \mathrm{Gal}(K/\mathbb Q_{\infty}) &\supset& \mathrm{Gal}(K/\mathbb Q(\mu_{2^{\infty}})) \\
\rotatebox[origin=c]{90}{$\subset$} && \rotatebox[origin=c]{90}{$\subset$} && \rotatebox[origin=c]{90}{$\subset$} && \rotatebox[origin=c]{90}{$\hookrightarrow$} \\
D_2 &\twoheadleftarrow& D_2^{\prime} &\twoheadrightarrow& D_2^{\prime}|_K &\hookleftarrow& \mathrm{Gal}(K_2/\mathbb Q_2(\mu_{2^{\infty}}))
\end{array}
\]
Since $\mathrm{Ker}\,\varphi$ is generated by inertia subgroups of infinite primes, $2\mathrm{Ker}\,\varphi=0$. Therefore Lemmas \ref{Xpell} and \ref{K_p} yield that $X_{\{2,\ell\}}(\mathbb Q_{\infty})/D_2$ is finite. 

Since $X_{\{\ell\}}(\mathbb Q_{\infty})/(D_p/I_p) \simeq X_{\{p,\ell\}}(\mathbb Q_{\infty})/D_p$ and $D_p/I_p$ are finite, $X_{\{\ell\}}(\mathbb Q_{\infty})$ is also finite. Thus we complete the proof of Theorem \ref{keythm}. 
\hfill$\square$
\\[-3mm]%

The proofs of Theorems \ref{mainthm} and \ref{imagquad} are also completed by Lemma \ref{reduce}. 

\section{Application to finite fields}

As a consequence of Theorem \ref{keythm}, we obtain the following result on finite fields. We denote by $\mathbb F_{\ell^n}$ the finite field of cardinality $\ell^n$ and characteristic $\ell$, and by $\mathbb F_{\ell^n}^{\times}$ the multiplicative group. 

\begin{proposition}\label{finitefield}
Let $p$ and $\ell$ be odd prime numbers such that $\ell \equiv 1 \pmod{p^{N+1}}$ and $\ell \not\equiv 1 \pmod{p^{N+2}}$, where $N=N_{\ell} \ge 0$. If $n \ge N$ is sufficiently large, there exists an element $z \in \mathbb F_{\ell^{p^{n-N}}}^{\times}$ of order $p^{n+1}$ such that the order of $1-z \in \mathbb F_{\ell^{p^{n-N}}}^{\times}$ is divisible by $p$, i.e., 
\[
(1-z)^{\frac{\ell^{p^{n-N}}-1}{p^{n+1}}} \neq 1 . 
\]
\end{proposition}

\begin{proof}
Put $\varpi_n=N_{\mathbb Q(\zeta_{p^{n+1}})/\mathbb B_n}(1-\zeta_{p^{n+1}})$, which is a prime element of $\mathbb B_n$ lying over $p$, and put the group $E^{\prime}(\mathbb B_n)=O_{\mathbb B_n}[\frac{1}{\varpi_n}]^{\times}$ of $p$-units of $\mathbb B_n$. Let $\xi_n$ be the cyclotomic unit defined in the proof of Lemma \ref{limE}. Since $\xi_n \otimes 1=\varpi_n^{\overline{\gamma}-1}\otimes 1$ generates $E(\mathbb B_n) \otimes_{\mathbb Z} \mathbb Z_p$ as a $\varLambda$-module, $E^{\prime}(\mathbb B_n) \otimes_{\mathbb Z} \mathbb Z_p$ is a cyclic $\varLambda$-module generated by $\varpi_n \otimes 1$. By class field theory, we obtain the exact sequence 
\[
\varprojlim E^{\prime}(\mathbb B_n) \otimes_{\mathbb Z} \mathbb Z_p \rightarrow \varprojlim (O_{\mathbb B_n}/\ell)^{\times} \otimes_{\mathbb Z} \mathbb Z_p \rightarrow X_{\{\ell\}}(\mathbb Q_{\infty})/(D_p/I_p) \rightarrow 0 . 
\]
Since $X_{\{\ell\}}(\mathbb Q_{\infty})$ is finite by Theorem \ref{keythm} and $\varprojlim (O_{\mathbb B_n}/\ell)^{\times} \otimes_{\mathbb Z} \mathbb Z_p \simeq \varLambda/f_{\ell}(T)$ by Lemma \ref{limO}, then $(\varpi_n \bmod{\ell}) \otimes 1 \in (O_{\mathbb B_n}/\ell)^{\times} \otimes_{\mathbb Z} \mathbb Z_p$ is nontrivial for all sufficiently large $n \ge N$, and hence there is a prime ideal $\mathfrak{l}$ of $\mathbb B_n$ lying over $\ell$ such that $(\varpi_n \bmod{\mathfrak{l}})\otimes 1 \in (O_{\mathbb B_n}/\mathfrak{l})^{\times} \otimes_{\mathbb Z} \mathbb Z_p$ is also nontrivial. Since there is an isomorphism $O_{\mathbb B_n}/\mathfrak{l} \simeq \mathbb Z[\zeta_{p^{n+1}}]/\mathfrak{L}$ of finite fields, where $\mathfrak{L}$ is a prime ideal of $\mathbb Q(\zeta_{p^{n+1}})$ lying over $\mathfrak{l}$, then $(\varpi_n \bmod{\mathfrak{L}})\otimes 1$ is nontrivial. Therefore there is a primitive $p^{n+1}$th root $\zeta_{p^{n+1}}^i$ of unity such that $(1-\zeta_{p^{n+1}}^i \bmod{\mathfrak{L}})\otimes 1 \in (\mathbb Z[\zeta_{p^{n+1}}]/\mathfrak{L})^{\times} \otimes_{\mathbb Z} \mathbb Z_p$ is nontrivial. Let $z \in \mathbb F_{\ell^{p^{n-N}}}$ be the image of $\zeta_{p^{n+1}}^i \bmod{\mathfrak{L}}$ under the isomorphism $\mathbb Z[\zeta_{p^{n+1}}]/\mathfrak{L} \rightarrow \mathbb F_{\ell^{p^{n-N}}}$. Then the order of $z \in \mathbb F_{\ell^{p^{n-N}}}^{\times}$ is $p^{n+1}$, and the order of $1-z \in \mathbb F_{\ell^{p^{n-N}}}^{\times}$ is not prime to $p$. 
\end{proof}

\begin{remark}{\em 
If $N=0$, the finiteness of $X_{\{\ell\}}(\mathbb Q_{\infty})$ is equivalent to the nontriviality of $(\varpi_n \bmod{\mathfrak{L}})\otimes 1$. 
It seems difficult even to prove Proposition \ref{finitefield} by treating only finite fields. 
}\end{remark}

\section{Order of finite Iwasawa modules}

In the case that $p \neq 2$, $X_S(\mathbb Q_{\infty})$ is finite if and only if $|S^{\prime}| \le 1$. In this section, we consider the order of $X_{\{\ell\}}(\mathbb Q_{\infty})$ in a special case. 

Put $p=3$, and let $\ell$ be a prime number such that $\ell \equiv 1 \pmod{3}$ and $\ell \not\equiv 1 \pmod{9}$. Let $\mathfrak{p}=(1-\zeta_3)$ (resp.\ $\mathfrak{L}$) be a prime ideal of $\mathbb Q(\zeta_3)$ lying over $p$ (resp.\ $\ell$). Let $\overline{\mathfrak{p}}$ be the maximal ideal of the valuation ring of the completion $\mathbb Q_p(\zeta_p)$ at $\mathfrak{p}$. Since the class number of $\mathbb Q(\zeta_3)$ is 1 and $E(\mathbb Q(\zeta_3))=\langle \zeta_6 \rangle \rightarrow (O_{\mathbb Q(\zeta_3)}/\mathfrak{p}^2)^{\times}$ is surjective, we can choose a generator $\alpha$ of $\mathfrak{L}=\alpha \mathbb Z[\zeta_3]$ such that $\alpha \equiv 1 \pmod{\mathfrak{p}^2}$. 

\begin{proposition}
Put $s=\mathrm{Tr}_{\mathbb Q(\zeta_3)/\mathbb Q} (\alpha)$ the trace of $\alpha$. Then $m=v_3(4 \ell - s^2)$ is odd, and $3 \leq |X_{\{\ell\}}(\mathbb Q_{\infty})|=3^{\frac{m-1}{2}} \le \sqrt{3}^{\frac{\log(4\ell)}{\log 3}-1}$. 
\end{proposition}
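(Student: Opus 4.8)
The plan is to collapse everything to a single $3$-adic valuation attached to $\alpha$ and then evaluate it. First I would record that $\ell\equiv 1\pmod 3$ together with $\ell\not\equiv 1\pmod 9$ forces $N_\ell=0$, hence $P_\ell=1$ and $f_\ell(T)=(1+T)-(1+3)=T-3$. By Lemma~\ref{limO} we have $\varprojlim(O_{\mathbb B_n}/\ell)^\times\otimes_{\mathbb Z}\mathbb Z_3\simeq\varLambda/(T-3)\simeq\mathbb Z_3$, and by Lemma~\ref{limE} together with the sequence in the proof of Lemma~\ref{reduce} (using $X_{\emptyset}(\mathbb Q_\infty)=0$), $X_{\{\ell\}}(\mathbb Q_\infty)\simeq\mathrm{coker}\bigl(\eta_\ell\colon \mathcal E/(T-3)\to\varprojlim(O_{\mathbb B_n}/\ell)^\times\otimes_{\mathbb Z}\mathbb Z_3\bigr)$ is the cokernel of an endomorphism of $\mathbb Z_3$. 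By Theorem~\ref{keythm} this is finite, so $X_{\{\ell\}}(\mathbb Q_\infty)\simeq\mathbb Z_3/c\,\mathbb Z_3$ for the image $c$ of the cyclotomic unit, and the whole statement reduces to the single assertion $v_3(c)=v_3(b)$ plus elementary bookkeeping.

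The parity claim and the bounds are then formal. Writing $\alpha=a+b\zeta_3$ with $a,b\in\mathbb Z$, one has $s=\mathrm{Tr}(\alpha)=2a-b$ and $\ell=N(\alpha)=a^2-ab+b^2$, so a direct computation gives
\[
4\ell-s^2=3b^2 .
\]
Hence $m=v_3(4\ell-s^2)=1+2v_3(b)$ is odd and $\tfrac{m-1}2=v_3(b)$. The normalization $\alpha\equiv 1\pmod{\mathfrak p^2}=(3)$ forces $3\mid b$ (and $b\neq 0$ since $\ell$ is prime), so $v_3(b)\ge 1$ and $|X_{\{\ell\}}(\mathbb Q_\infty)|=3^{v_3(b)}\ge 3$. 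For the upper bound, $3b^2=4\ell-s^2\le 4\ell$ gives $3^{v_3(b)}\le|b|\le\sqrt{4\ell/3}=\sqrt3^{\,\log(4\ell)/\log 3-1}$.

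The heart of the proof is $v_3(c)=v_3(b)$. Here I would use the class-field-theoretic/Kummer duality already exploited in Lemma~\ref{K_p}: the reduction map whose cokernel is $X_{\{\ell\}}(\mathbb Q_\infty)$ is, up to the groups $D_p,I_p$ appearing in the proof of Theorem~\ref{keythm}, dual to the Kummer generator $\alpha^{e}$ with $e=\tfrac12(1-c)\in\mathbb Z_3[\varDelta]$, where $c$ is complex conjugation. Concretely $\alpha^{2e}=\alpha/\bar\alpha$, and since $\bar\alpha\equiv\alpha\equiv 1\pmod{\mathfrak p^2}$ one computes
\[
v_3\!\left(\tfrac{\alpha}{\bar\alpha}-1\right)=v_3(\alpha-\bar\alpha)=v_3\bigl(b\sqrt{-3}\bigr)=v_3(b)+\tfrac12 ,
\]
so $v_3(\log_3\alpha^{e})=v_3(b)+\tfrac12$ (the series converges and is dominated by its linear term because $v_3(b)+\tfrac12>\tfrac1{p-1}=\tfrac12$). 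I expect $3^{v_3(b)}$ to emerge as $3^{\,v_3(\log_3\alpha^{e})-1/(p-1)}$, the ramification contribution $\tfrac1{p-1}=\tfrac12$ exactly cancelling the half-integer coming from $\sqrt{-3}$.

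The main obstacle will be making this last index computation \emph{exact} rather than up to a bounded power of $3$. One must (i) pin down the local index $[U:\overline V]$ in the notation of Lemma~\ref{K_p} as precisely $3^{\,v_3(\log_3\alpha^{e})-1/2}$, controlling the ramified prime $\mathfrak p$ (where $v_{\mathfrak p}(3)=2$ produces the half-integer valuations) without losing a factor, and (ii) account for the passage $X_{\{\ell\}}(\mathbb Q_\infty)=X_{\{p,\ell\}}(\mathbb Q_\infty)/I_p$, i.e.\ show that the bookkeeping $|D_p/I_p|\cdot|X_{\{p,\ell\}}(\mathbb Q_\infty)/D_p|$ contributes no $3$-power beyond $3^{v_3(b)}$. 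A more hands-on alternative is to compute the ray class groups $A_{\{\ell\}}(\mathbb B_n)$ directly as the cokernels in $(\ref{ray0})$ and show their orders stabilize to $3^{v_3(b)}$; this trades the local index for a discrete-logarithm valuation of the reduction of the cyclotomic unit modulo $\ell$, which is the same difficulty in a different guise.
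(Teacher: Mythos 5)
Your elementary bookkeeping is correct, and in places cleaner than the paper's own: writing $\alpha=a+b\zeta_3$ gives the identity $4\ell-s^2=3b^2$ directly, so $m=1+2v_3(b)$ is odd, the normalization $\alpha\equiv 1\pmod{\mathfrak p^2}$ forces $3\mid b$ and hence the lower bound $3\le 3^{v_3(b)}$, and $3b^2\le 4\ell$ gives the upper bound exactly as stated. (The paper instead derives oddness from $\alpha=\frac{s\pm\sqrt{s^2-4\ell}}{2}\in\mathbb Q(\sqrt{-3})$ and the lower bound from $A_{\{\ell\}}(\mathbb Q)\neq 0$; these are equivalent.) Your reduction to $X_{\{\ell\}}(\mathbb Q_{\infty})\simeq\mathbb Z_3/c\,\mathbb Z_3$ is also sound, and matches the paper, which obtains cyclicity from Lemma \ref{Xpell}.

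The genuine gap is the heart of the matter, $v_3(c)=v_3(b)$, which you state as a plan and explicitly leave open: your items (i) and (ii) are precisely where all the work lies, and your heuristic $3^{v_3(\log_3\alpha^{e})-1/2}$ is numerically right but unproven. Note that the paper does \emph{not} compute a local index $[U:\overline V]$ at all; it works with the global Kummer extension $K=L_{\{3,\ell\}}(\mathbb Q_{\infty})\mathbb Q(\mu_{3^{\infty}})$ of Section 3 (generated for $p=3$ by $\beta=\alpha^{1-J}$, your $\alpha^{2e}$) and factors $|X_{\{\ell\}}(\mathbb Q_{\infty})|=|\mathrm{Gal}(K/\mathbb Q(\mu_{3^{\infty}}))/\overline{D}_3|\cdot|\overline{D}_3/\overline{I}_3|$. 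The first factor is pinned down exactly as $3^{r}$ with $r=\min\{\,n\ge 0\ |\ \beta\in(\mathbb Q_3(\zeta_3)^{\times})^{3^n}\,\}$: one must show the root $x$ with $\beta=x^{3^r}$ remains a non-cube in $\mathbb Q_3(\mu_{3^{\infty}})^{\times}$ (the $x=\zeta_3^i y^3$ adjustment argument), and then $r$ is evaluated from the exact filtration ${U^{(2)}}^{3^n}=U^{(2n+2)}$ together with $N_{\mathbb Q(\zeta_3)/\mathbb Q}(\alpha-\alpha^J)=4\ell-s^2$, giving $m=2r+3$ --- this is the rigorous version of your half-integer valuation computation and is what resolves your obstacle (i). Your obstacle (ii) is the paper's most delicate step: $|\overline{D}_3/\overline{I}_3|=3$ \emph{exactly}, proved by combining $3(D_3/I_3)=f_{\ell}(0)(D_3/I_3)=0$ (so the unramified quotient, being procyclic, has order at most $3$) with the non-cyclicity of $\mathrm{Gal}(K_3/\mathbb Q_3(\zeta_3))\simeq\mathbb Z_3\rtimes\mathbb Z_3$, the abelianness from Lemma \ref{K}, and the $3$-adic Kronecker--Weber theorem, which together force $\mathbb Q_3(\mu_{3^{\infty}})(\sqrt[3]{x})/\mathbb Q_3(\mu_{3^{\infty}})$ to be nontrivial and unramified. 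Without supplying these two exactness mechanisms (or the equally hard discrete-logarithm computation in your ``hands-on alternative''), your argument only yields $|X_{\{\ell\}}(\mathbb Q_{\infty})|=3^{v_3(b)}$ up to a bounded power of $3$, which does not prove the proposition.
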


\begin{proof}
By Lemma \ref{Xpell}, $X_{\{p,\ell\}}(\mathbb Q_{\infty}) \simeq \mathbb Z_p$ and hence $X_{\{\ell\}}(\mathbb Q_{\infty})$ is a finite cyclic $p$-group. Since $A_{\{\ell\}}(\mathbb Q) \neq 0$, we have $p \le |X_{\{\ell\}}(\mathbb Q_{\infty})|$. Put $\beta=\alpha^{1-J} \not\in {\mathbb Q(\mu_{p^{\infty}})^{\times}}^p$ where $J$ is the complex conjugation. Since $p=3$, $K=\mathbb Q(\mu_{p^{\infty}})(\sqrt[p^n]{\beta}\ |\ n \ge 1\,)$ (resp.\ $K_p=K \mathbb Q_p$) coincides with the Kummer extension over $\mathbb Q(\mu_{p^{\infty}})$ (resp.\ $\mathbb Q_p(\mu_{p^{\infty}})$) defined in Section 3. Then $K=L_{\{p,\ell\}}(\mathbb Q_{\infty}) \mathbb Q(\mu_{p^{\infty}})$ by Lemma \ref{K}. Let $\overline{D}_p$ (resp.\ $\overline{I}_p$) be the decomposition (resp.\ inertia) subgroup of $\mathrm{Gal}(K/\mathbb Q(\mu_{p^{\infty}}))$ at the prime lying over $p$. Then $\overline{D}_p \simeq \mathrm{Gal}(K_p/\mathbb Q_p(\mu_{p^{\infty}}))$ and $|X_{\{\ell\}}(\mathbb Q_{\infty})|=|\mathrm{Gal}(K/\mathbb Q(\mu_{p^{\infty}}))/\overline{I}_p|$. 

Put $r=\min\{\,0 \le n \in \mathbb Z\ |\ \beta \in {\mathbb Q_p(\zeta_p)^{\times}}^{p^n}\,\}$. Then there exists some $x \in \mathbb Q_p(\zeta_p)^{\times}$ such that $\beta = x^{p^r}$ and $x \not\in {\mathbb Q_p(\zeta_p)^{\times}}^p$. Put $U^{(n)}=\{\, u \in \mathbb Q_p(\zeta_p)\ |\ u \equiv 1 \bmod{(1-\zeta_p)^n} \,\}$. Then $U^{(1)}=\langle \zeta_p \rangle \oplus U^{(2)}$ and $\beta \in U^{(2)} \simeq {\mathbb Z_p}^{\!\oplus 2}$. Since $0 < v_p(\beta-1)=\sum_{j=0}^{p^r-1} v_p((x-1)+(1-\zeta_{p^r}^j))$, we have $v_p(x-1)>0$, i.e., $x \in U^{(1)}$. Since ${U^{(1)}}^p={U^{(2)}}^p$, we may assume that $x \in U^{(2)}$, and hence $r=\min\{\,n \ge 0 \ |\ \beta \in {U^{(2)}}^{p^n}\,\}$. 

Suppose that $x \in {\mathbb Q_p(\mu_{p^{\infty}})^{\times}}^p$. Then $\mathbb Q_p(\zeta_p, \sqrt[p]{x})=\mathbb Q_p(\zeta_{p^2})$ and hence $x=\zeta_p^i y^p$ with some $i$ and $y \in \mathbb Q_p(\zeta_p)^{\times}$. Since $0<v_p(\zeta_p^{-i}x-1)=v_p(y^p-1)=\sum_{j=0}^{p-1} v_p(\zeta_p^j y-1)$, we have $y^p \in U^{(2)}$. Hence $\zeta_p^i =xy^{-p} \in U^{(2)}$, i.e., $\zeta_p^i=1$. This is a contradiction. Therefore $x \not\in {\mathbb Q_p(\mu_{p^{\infty}})^{\times}}^p$. This implies that $|\mathrm{Gal}(K/\mathbb Q(\mu_{p^{\infty}}))/\overline{D}_p|=p^r$. 

As in the proof of Theorem \ref{keythm}, let $D_p$ (resp.\ $I_p$) be the decomposition (resp.\ inertia) subgroup of $X_{\{p,\ell\}}(\mathbb Q_{\infty})$ at the prime lying over $p$. Then $\overline{D}_p/\overline{I}_p \simeq D_p/I_p$, and we have seen that $p(D_p/I_p)=f_{\ell}(0)(D_p/I_p)=0$. Since $\mathrm{Gal}(K_p/\mathbb Q_p(\zeta_p)) \simeq \mathbb Z_p \rtimes \mathbb Z_p$ is not a cyclic pro-$p$ group, the topological commutator subgroup of $\mathrm{Gal}(K_p/\mathbb Q_p(\zeta_p))$ is contained in the non-trivial maximal subgroup of $\mathrm{Gal}(K_p/\mathbb Q_p(\mu_{p^{\infty}})) \simeq \mathbb Z_p$. Hence $\mathbb Q_p(\mu_{p^{\infty}})(\sqrt[p]{x})/\mathbb Q_p(\zeta_p)$ is abelian. By Lemma \ref{K}, $\mathbb Q_p(\mu_{p^{\infty}})(\sqrt[p]{x})/\mathbb Q_p$ is also abelian. The $p$-adic version of Kronecker-Weber theorem yields that $\mathbb Q_p(\mu_{p^{\infty}})(\sqrt[p]{x})/\mathbb Q_p(\mu_{p^{\infty}})$ is a nontrivial unramified extension. Hence $|\overline{D}_p/\overline{I}_p|=p$ and therefore $|X_{\{\ell\}}(\mathbb Q_{\infty})|=p^{r+1}$. 

Since $\log_p({U^{(2)}}^{p^n})=p^n \overline{\mathfrak{p}}^2=\overline{\mathfrak{p}}^{2n+2}=\log_p(U^{(2n+2)})$ and $\log_p : U^{(2)} \rightarrow \overline{\mathfrak{p}}^2$ is an isomorphism, we have ${U^{(2)}}^{p^n}=U^{(2n+2)}$ for any $n \ge 0$. Since $\beta-1=\frac{\alpha-\alpha^J}{\alpha^J}$ and $N_{\mathbb Q(\zeta_3)/\mathbb Q} (\alpha-\alpha^J)=4 \ell-s^2$, we have $r=\max\{\,n \ge 0 \ |\ 4 \ell-s^2 \equiv 0 \bmod{p^{2n+2}}\,\}$. Since 
\[
\alpha=\frac{s \pm \sqrt{s^2-4 \ell}}{2} \in \mathbb Q(\sqrt{-3}) 
\]
and $\alpha \not\in \mathbb Q$, then $4 \ell-s^2>0$ and $m=v_p(4 \ell-s^2)$ is odd. Hence $m=2r+3$, i.e, $|X_{\{\ell\}}(\mathbb Q_{\infty})|=p^{\frac{m-1}{2}}$. Since $3^m = p^m \le 4 \ell -s^2 < 4 \ell$, then $m \le \frac{\log 4\ell}{\log 3}$. 
\end{proof}




\vspace*{15pt}
\noindent\textbf{Acknowledgement.} 
This work was supported by Grant-in-Aid for Young Scientists (B) (22740010) and Grant-in-Aid for Scientific Research (C) (21540030). 

\begin{reference}

\bibitem{Brumer} A. Brumer, 
\textit{On the units of algebraic number fields}, 
Mathematika \textbf{14} (1967), 121--124. 

\bibitem{FW79} B. Ferrero and L. C. Washington, 
\textit{The Iwasawa invariant $\mu_p$ vanishes for abelian number fields}, 
Ann.\ of Math.\ \textbf{109} (1979), no.\ 2, 377--395.

\bibitem{Fer80} B. Ferrero, 
\textit{The cyclotomic $\mathbb Z_2$-extension of imaginary quadratic fields}, 
Amer.\ J. Math.\ \textbf{102} (1980), no.\ 3, 447--459.

\bibitem{FOO06} S. Fujii, Y. Ohgi and M. Ozaki, 
\textit{Construction of $\mathbb Z_p$-extensions with prescribed Iwasawa $\lambda$-invariants}, 
J.\ Number Theory \textbf{118} (2006), no.\ 2, 200--207. 


\bibitem{Gre76} R. Greenberg, 
\textit{On the Iwasawa invariants of totally real number fields}, 
Amer.\ J.\ Math.\ \textbf{98} (1976), no.\ 1, 263--284. 

\bibitem{IS96} H. Ichimura and H. Sumida, 
\textit{On the Iwasawa invariants of certain real abelian fields II}, 
Inter.\ J.\ Math.\ \textbf{7} (1996), no.\ 6, 721--744. 

\bibitem{Iwa59} K. Iwasawa, 
\textit{On $\Gamma$-extensions of algebraic number fields}, 
Bull.\ Amer.\ Math.\ Soc.\ \textbf{65} (1959), 183--226. 


\bibitem{Iwa73} K. Iwasawa, 
\textit{On the $\mu $-invariants of $\mathbb Z_l$-extensions}, 
Number theory, algebraic geometry and commutative algebra, in honor of Yasuo Akizuki, 
pp.\ 1--11.\ Kinokuniya, Tokyo, 1973.

\bibitem{Iwa81} K. Iwasawa, 
\textit{Riemann-Hurwitz formula and $p$-adic Galois representations for number fields}, 
T\^ohoku Math.\ J.\ (2) \textbf{33} (1981), no.\ 2, 263--288. 

\bibitem{Kida79} Y. Kida, 
\textit{On cyclotomic $\mathbb Z_2$-extensions of imaginary quadratic fields}, 
Tohoku Math.\ J.\ (2) \textbf{31} (1979), no.\ 1, 91--96. 

\bibitem{Kida80} Y. Kida, 
\textit{$l$-extensions of CM-fields and cyclotomic invariants}, 
J.\ Number Theory \textbf{12} (1980), no.\ 4, 519--528. 

\bibitem{Kida82} Y. Kida, 
\textit{Cyclotomic $\mathbb Z_2$-extensions of $J$-fields}, 
J. Number Theory \textbf{14} (1982), no.\ 3, 340--352. 

\bibitem{Miz00} Y. Mizusawa, 
\textit{On Greenberg's conjecture on a certain real quadratic field}, 
Proc.\ Japan Acad.\ Ser.\ A Math.\ Sci.\ \textbf{76} (2000), no.\ 10, 163--164. 

\bibitem{NSW2} J. Neukirch, A. Schmidt and K. Wingberg, 
\textit{Cohomology of Number fields (2nd.\,ed.)}, GMW \textbf{323}, Springer, 2008. 



\bibitem{Oza09} M. Ozaki, 
\textit{Construction of real abelian fields of degree $p$ with $\lambda_p = \mu_p = 0$}, 
Int.\ J.\ Open Problems Compt.\ Math.\ \textbf{2} (2009), no.\ 3, 342--351. 

\bibitem{Salle} L. Salle, 
\textit{On maximal tamely ramified pro-$2$-extensions over the cyclotomic $\mathbb Z_2$-extension of an imaginary quadratic field}, 
Osaka J.\ Math.\ \textbf{47} (2010), no.\,4, 921--942.

\bibitem{Sin80} W. Sinnott, 
\textit{On the Stickelberger ideal and the circular units of an abelian field}, 
Invent.\ Math.\ \textbf{62} (1980), 181--234.

\bibitem{Was} 
L. C. Washington, 
\textit{Introduction to Cyclotomic Fields (2nd.\,ed.)}, 
GTM \textbf{83}, Springer, 1997.

\end{reference}

\end{document}